\newcommand{\T}{\mathbb{T}}
\newcommand{\ii }{{\rm i} }
\newcolumntype{C}[1]{>{\centering\arraybackslash}b{#1}}
\newcolumntype{R}[1]{>{\raggedleft\arraybackslash}b{#1}}
\newcolumntype{L}[1]{>{\raggedright\arraybackslash}b{#1}}
\newcolumntype{M}[1]{>{\centering}m{#1}}
\newtheorem{theo}{Theorem}[section]
\newtheorem{lem}{Lemma}[section]
\newtheorem{prop}{Proposition}[section]
\newtheorem{remark}{Remark}[section]
\numberwithin{equation}{section}
\newcommand{\R}{\mathbb{R}}
\newcommand{\x}{\mathbf{x}}
\newcommand{\C}{\mathbb{C}}
\newcommand{\pare}[1]{\left( #1 \right)}
\newcommand{\bra}[1]{\left[ #1 \right]}
\newcommand{\av}[1]{\left|#1\right|}
\newcommand{\bbra}[1]{\left\llbracket #1 \right\rrbracket}
\date{}
\title{Steady vortex sheets in presence of surface tension}
\author{Federico Murgante \hspace{1cm} Emeric Roulley \hspace{1cm} Stefano Scrobogna}
\begin{document}
	\maketitle
	\begin{abstract}
		We prove a bifurcation result of uniformly-rotating/stationary non-trivial vortex sheets near the circular distribution for a model of two irrotational fluids with same density taking into account surface tension effects. As bifurcation parameters, we play with either the speed of rotation, the surface tension coefficient or the mean vorticity.
	\end{abstract}
	\tableofcontents
	\section{Introduction}

	We consider a planar Euler system for two irrotational fluids with same density (constant equal to $1$) separated by an interface $\Gamma(t)$ homeomorphic to a circle and parametrized by $z(t,\cdot):\mathbb{T}\rightarrow\mathbb{R}^2.$ This interface divides the plane into two open components $\Omega^{\pm}(t)$ with $\Omega^{-}(t)$ bounded and $\Omega^{+}(t)$ unbounded. The evolutionary system is thus composed of the following equations
	\begin{equation}\label{Euler}
		\begin{cases}
			u_t^{\pm}+u^{\pm}\cdot\nabla u^{\pm}+\nabla p^{\pm}=0, & \textnormal{in }\Omega^{\pm}(t),\\
			\big(z_t-u^{\pm}|_{\Gamma(t)}\big)\cdot z_{x}^\perp=0,& \textnormal{at }\Gamma(t),\\
			(p^--p^+)|_{\Gamma(t)}=\sigma\mathcal{K}(z), & \textnormal{at }\Gamma(t),\\
			u^{+}(t,\mathbf{x}) \to 0, & \text{as } |\mathbf{x}|\to +\infty,\\
			\nabla\cdot u^{\pm}=0, & \textnormal{in }\Omega^{\pm}(t),\\
			\nabla^{\perp}\cdot u^{\pm}=0, & \textnormal{in }\Omega^{\pm}(t).
			
		\end{cases}
	\end{equation}
	In the above set of equations, the quantities $u^{\pm},p^{\pm}$ are respectively the velocity field and pressure inside the domain $\Omega^{\pm}.$ The parameter $\sigma\geqslant 0$ is the surface tension coefficient and $\mathcal{K}(z)$ is the curvature defined by
	$$\mathcal{K}(z)\triangleq
	-\frac{z_{x}^{\perp}\cdot z_{xx}}{|z_x|^3}\cdot$$
	The last equation in \eqref{Euler} implies that the vorticity distribution $\boldsymbol{\omega}$ is localized on the curve $\Gamma(t)$ at time $t,$ namely
	\begin{align}\label{eq:vorticty_Dirac}
		\boldsymbol{\omega}(t,\mathbf{x})=\omega(t,x)\delta\big(\mathbf{x}-z(t,x)\big),\qquad\mathbf{x}\in\mathbb{R}^2,\quad x\in\mathbb{T}.
	\end{align}
	Such a solution is called \textit{vortex sheet}.  The velocity fields are recovered through the so-called Biot-Savart law
	\begin{equation}
		\label{eq:upm_BS}
		u^{\pm}(t,\mathbf{x})=\int_{\mathbb{T}}\frac{\big(\mathbf{x}-z(t,y)\big)^{\perp}}{\big|\mathbf{x}-z(t,y)\big|^2}\omega(t,y)\textnormal{d} y,
	\end{equation}
	where we used the following convention for the integral on the torus $\T$
	$$\int_{\mathbb{T}}f(y)\textnormal{d} y\triangleq\frac{1}{2\pi}\int_{0}^{2\pi}f(y)\textnormal{d} y.$$
	The equations for the system \eqref{Euler} in the case in which the vorticty is a vortex sheet (i.e. \eqref{eq:vorticty_Dirac}) are called {\it Kelvin-Helmholtz system}. From \eqref{eq:upm_BS} and standard suppression of the pressure via Leray projectors it is clear that the bulk quantities $ u^\pm $ and $ p^\pm $ can be expressed in terms of the interface quantities $ \omega $ and $ z $, thus recasting \eqref{Euler} as a {\it Contour Dynamic Equation (CDE)} on $ \mathbb{T} $. The resulting system, derived in Appendix \ref{sec:contour_derivation}, writes as 
	\begin{equation}\label{VS system}
		\left\lbrace
		\begin{aligned}
			& \pare{z_t - \textnormal{BR}(z)\omega\big. }\cdot z_x^\perp =0,
			\\
			&  \omega_t = \pare{
				\omega \ \frac{\pare{z_t - \textnormal{BR}(z)\omega}\cdot z_x}{\av{z_x}^2} 
			}_x - \sigma \pare{\mathcal{K}\pare{z}}_x.
		\end{aligned}
		\right.
	\end{equation}
	where the {\it Birkhoff-Rott integral operator} is defined by
	$$\textnormal{BR}(z)\omega(t,x)\triangleq
	\textnormal{p.v.}\int_{\mathbb{T}}\frac{\big(z(t,x)-z(t,y)\big)^{\perp}}{|z(t,x)-z(t,y)|^2}\omega(t,y)\textnormal{d} y.$$
	Let us choose a parametrization in the form
	\begin{align}\label{graph}
		z(t,x)=R(t,x)e^{\ii x},\qquad R(t,x)\triangleq
		\sqrt{1+2\eta(t,x)}.
	\end{align}
	The choice of the parametrization \eqref{graph}, which is a graph on the unit circle, allows us to recast the system \eqref{VS system} in a more congenial form. The detailed computations are performed in Appendix \ref{appendix derive eq} and produce the system
	\begin{equation}\label{eq:KH2}
		\left\lbrace
		\begin{aligned}
			&\eta_t=-\frac{1}{2} \mathscr{H}(\eta)[\omega],\\
			&\omega_t=-\left(\frac{\omega}{2}\mathscr{D}_0(\eta)[\omega]\right)_x-\sigma\big(\mathscr{K}(\eta)\big)_x,
		\end{aligned} 
		\right. 
	\end{equation}
	where
	\begin{align}
		\mathscr{H}(\eta)[\omega]&\triangleq
		\eta_{x}\mathscr{D}_0(\eta)[\omega]+\mathscr{H}_0(\eta)[\omega],\label{def H}\\
		\mathscr{D}_0(\eta)[\omega](x)&\triangleq
		\textnormal{p.v.}\int_{\mathbb{T}}\frac{1-\sqrt{\frac{1+2\eta(y)}{1+2\eta(x)}}\cos(x-y)}{1+\eta(x)+\eta(y)-\sqrt{1+2\eta(x)}\sqrt{1+2\eta(y)}\cos(x-y)}\omega(y)\textnormal{d} y,\label{def D0}\\
		\mathscr{H}_0(\eta)[\omega](x)&\triangleq
		\textnormal{p.v.}\int_{\mathbb{T}}\frac{\sqrt{1+2\eta(x)}\sqrt{1+2\eta(y)}\sin(x-y)}{1+\eta(x)+\eta(y)-\sqrt{1+2\eta(x)}\sqrt{1+2\eta(y)}\cos(x-y)}\omega(y)\textnormal{d} y,\label{def H0}\\
		\mathscr{K}(\eta)&\triangleq
		\frac{\eta_{xx}-2\left(\frac{\eta_{x}}{R}\right)^2}{\left(R^2+\left(\frac{\eta_{x}}{R}\right)^2\right)^{\frac{3}{2}}}-\left(R^2+\left(\frac{\eta_{x}}{R}\right)^2\right)^{-\frac{1}{2}}.\label{def K}
	\end{align} 
	Using the divergence free property of the flow, Stokes' Theorem, the second equation in \eqref{Euler} and the first equation in  \eqref{basic idtt2},  we get 
	$$0=\int_{\Omega^-(t)}\nabla\cdot u^-(t,\x)\,d\x=\int_{0}^{2\pi}u^{-}\big(t,z(t,x)\big)\cdot z_x^\perp(t,x)\, d x =\int_{0}^{2\pi} z_t(t,x)\cdot z_x^\perp(t,x)\,dx =-\frac{d}{dt}\int_{0}^{2\pi}\eta(t,x)\,dx.$$
	Therefore, the space average of $\eta$ is preserved and we impose
	$$\int_{\mathbb{T}}\eta(x)dx=0.$$
	%	
	%	
	%	
	%	Let us denote $\mathscr{A}\big(\Omega^{-}(t)\big)$ the area of the enclosed domain $\Omega^{-}(t)$. Using polar coordinates, we can write
	%	$$\mathscr{A}\big(\Omega^{-}(t)\big)=\int_{0}^{2\pi}\int_{0}^{R(t,x)}\rho d\rho dx.$$
	%	Due to the divergence free property of the fluid structure, the area is preserved along the motion. Hence, differentiating in time, we find
	%	$$0=\frac{d}{dt}\mathscr{A}\big(\Omega^{-}(t)\big)=\int_{0}^{2\pi}\eta_{t}(t,x)dx=\frac{d}{dt}\int_{0}^{2\pi}\eta(t,x)dx.$$
	%	
	The second equation in \eqref{eq:KH2} implies the conservation of the mean vorticity and in the sequel, we denote
	$$\gamma\triangleq\int_{\mathbb{T}}\omega(x)dx.$$
	Let us introduce the velocity potential $\psi$ through the relation
	\begin{equation}\label{def:psi}
		\omega=\gamma+\psi_x,\qquad\textnormal{i.e.}\qquad\psi\triangleq\partial_{x}^{-1}(\omega-\gamma).
	\end{equation}
	In the new variables $(\eta,\psi)$, the system \eqref{eq:KH2} becomes
	\begin{equation}\label{eq:KH3}
		\left\lbrace
		\begin{aligned}
			&\eta_t=-\frac{1}{2} \mathscr{H}(\eta)[\psi_x]-\frac{\gamma}{2} \mathscr{H}(\eta)[1],\\
			&\psi_t=-\frac{\psi_x+\gamma}{2}\mathscr{D}_0(\eta)[\psi_x+\gamma]-\sigma\mathscr{K}(\eta).
		\end{aligned} 
		\right. 
	\end{equation}
	Let us emphazise that to obtain the second equation in \eqref{eq:KH3}, we have integrated in space the second equation in \eqref{eq:KH2}. Therefore, the second equation is well-defined up to a time-dependent additive constant.\\
	
	The system \eqref{eq:KH3} can be reformulated in a Hamiltonian form, akin to the approach taken by \cite{BB97} for perturbations of the flat interface. The Hamiltonian structure is an important ingredient in the study of small divisors problems for PDEs to construct for instance quasi-periodic solutions \cite{BBHM18,BFM21,BFM24,BHM22,BM20,FG20a,FG20b,GIP23,HR21,HR22,HHR23} and a fundamental tool to study long time stability in presence of resonances \cite{BMM2022,BMM2023,BD2016,MMS2024, BCGS2023}. Since it falls outside the scope of this manuscript and does not play an important role in our bifurcation analysis, we omit the derivation of the Hamiltonian formulation and we leave it to our future investigations.
	\subsection{Main results}
	We look for uniformly rotating vortex sheets, that is traveling wave solutions of the system \eqref{eq:KH3}, namely
	$$(\eta,\psi)(t,x)=(\check{\eta},\check{\psi})(x-ct),\qquad c\in\mathbb{R},\qquad\check{\eta},\check{\psi}\in L^2(\mathbb{T}).$$
	When $c=0$, the corresponding solutions are stationary. The equations \eqref{eq:KH3} rewrite in this context (for simplicity of notations, we still denote $(\eta,\psi)$ instead of $(\check{\eta},\check{\psi})$)
	\begin{equation}\label{eq:KH4}
		\begin{cases}
			c\,\eta_x+\frac{1}{2} \mathscr{H}(\eta)[\psi_x]+\frac{\gamma}{2} \mathscr{H}(\eta)[1]=0,\\
			c\,\psi_x+\frac{\psi_x+\gamma}{2}\mathscr{D}_0(\eta)[\psi_x+\gamma]+\sigma\mathscr{K}(\eta)=0.%\textcolor{red}{-\mathscr{A}(\eta,\psi)}
		\end{cases} 
	\end{equation}
	Given $\mathbf{m}\in\mathbb{N}^*$, we say that a uniformly rotating vortex sheet is $\mathbf{m}$-fold if the functions $\check{\eta}$ and $\check{\psi}$ are $\frac{2\pi}{\mathbf{m}}$ periodic. 
	\\
	
	The couple $(\eta,\psi)=(0,0)$ is a trivial solution of \eqref{eq:KH3} for any values of the parameters $c,\sigma,\gamma$. This solution corresponds to the family of circular stationary vortex sheets given by
	$$z(x)=e^{\ii x},\qquad\omega\equiv\gamma,\qquad u^{-}(\mathbf{x})\equiv 0,\qquad u^{+}(\mathbf{x})=\gamma  \frac{\mathbf{x}^\perp}{|\mathbf{x}|^2}\cdot$$
	We refer the reader to Lemma \ref{lemma trivial sol} for more details. Our main result, stated below, gives the existence of non-trivial solutions which are small amplitude perturbations of this stationary state. 
	\begin{theo}
		\label{thm bif vortex sheet}\textbf{(Local bifurcation of vortex sheets from the circular distribution)}
		\begin{enumerate}[label=(\roman*)]
			\item Let $\sigma>0$, $\gamma\in\mathbb{R}$ and $\mathbf{m}\in\mathbb{N}^*$. Assume that 
			\begin{equation}\label{cond c thm}
				\frac{\gamma^2-\sigma}{\sigma\mathbf{m}^2}\not\in\mathbb{N}^*
			\end{equation}
			supplemented by one of the following conditions
			\begin{enumerate}
				\item $4\sigma(2-\sqrt{3})<\gamma^2<4\sigma(2+\sqrt{3}).$
				\item $\gamma^2\in[0,\infty)\setminus\left[4\sigma(2-\sqrt{3}),4\sigma(2+\sqrt{3})\right]$ and $\mathbf{m}\in\mathbb{R}\setminus[m_-(\sigma,\gamma),m_+(\sigma,\gamma)],$ with
				$$m_{\pm}(\sigma,\gamma)=\frac{\gamma^2}{4\sigma}\pm\frac{1}{4\sigma}\sqrt{(\gamma^2-8\sigma)^2-48\sigma^2}\cdot$$
			\end{enumerate}
			Then, there exist two branches of $\mathbf{m}$-fold uniformly rotating vortex-sheets bifurcating from the circular distribution at angular speed
			$$c_{\mathbf{m}}^{\pm}(\sigma,\gamma)=-\frac{\gamma}{2}\pm\frac{1}{2}\sqrt{2\sigma\mathbf{m}-\gamma^2+\frac{2(\gamma^2-\sigma)}{\mathbf{m}}}\cdot$$
			\item Let $(c,\gamma)\in\mathbb{R}^2\setminus\{(0,0)\}.$ Fix $\mathbf{m}\in\mathbb{N}\setminus\{0,1\}$ such that
			$$\mathbf{m}>\frac{2\gamma^2}{\left(2c+\gamma\right)^2+\gamma^2}\cdot$$
			Assume in addition that
			\begin{equation}\label{condition sigma thm}
				\frac{(2\mathbf{m}-1)\gamma^2-(2c+\gamma)^2}{\mathbf{m}(2c+\gamma)^2+(\mathbf{m}-2)\gamma^2}\not\in\mathbb{N}^*.
			\end{equation}
			Then, there exists one branch of $\mathbf{m}$-fold uniformly rotating vortex-sheets with speed $c$ bifurcating from the circular distribution for
			$$\sigma_{\mathbf{m}}(c,\gamma)=\frac{\mathbf{m}\left(2c+\gamma\right)^2+(\mathbf{m}-2)\gamma^2}{2(\mathbf{m}^2-1)}\cdot$$
			\item Let $\sigma>0$ and $\mathbf{m}\in\mathbb{N}\setminus\{0,1\}.$ Then, there exist two branches of $\mathbf{m}$-fold stationary vortex-sheets bifurcating from the circular distribution for
			$$\gamma_{\mathbf{m}}^{\pm}(\sigma)=\pm\sqrt{\sigma(\mathbf{m}+1)}\cdot$$
		\end{enumerate}
	\end{theo}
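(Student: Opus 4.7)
The plan is to apply the Crandall--Rabinowitz theorem on bifurcation from a simple kernel to the nonlinear functional obtained by recasting \eqref{eq:KH4} as $F(\lambda,\eta,\psi)=0$. I work in the Sobolev scale of $\mathbf{m}$-fold periodic profiles with $\int_{\mathbb{T}}\eta\,dx=0$, with the second equation understood modulo a spatial constant; the bifurcation parameter $\lambda$ is $c$ in case (i), $\sigma$ in case (ii), and $\gamma$ in case (iii). The trivial branch $(\eta,\psi)=(0,0)$ solves $F(\lambda,0,0)=0$ for all $\lambda$ thanks to Lemma \ref{lemma trivial sol}. Smoothness of $F$ near this branch follows from standard estimates for Cauchy-type integrals on smooth closed curves close to the unit circle, applied to \eqref{def D0}--\eqref{def H0} and to the algebraic formula \eqref{def K}.

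The computational heart of the proof is the linearization $L_\lambda\triangleq D_{(\eta,\psi)}F(\lambda,0,0)$. Using $1-\cos(x-y)=2\sin^2\!\bigl(\tfrac{x-y}{2}\bigr)$ and the Fourier representation of the circle Hilbert kernel $\cot\!\bigl(\tfrac{x-y}{2}\bigr)$, one verifies
$$\mathscr{D}_0(0)[\omega]=\int_{\mathbb{T}}\omega,\qquad \mathscr{H}_0(0)[e^{\ii ny}](x)=-\ii\,\mathrm{sign}(n)\,e^{\ii nx},\qquad D_\eta\mathscr{K}(0)[\widehat\eta]=\widehat\eta_{xx}+\widehat\eta.$$
The remaining directional derivatives $D_\eta\mathscr{H}_0(0)[1]$ and $D_\eta\mathscr{D}_0(0)[\gamma]$ are obtained by differentiating the integrands in \eqref{def D0}--\eqref{def H0} and integrating by parts via the identity $\csc^2(\theta/2)=-2\partial_\theta\cot(\theta/2)$; both turn out to be Fourier multipliers expressible in terms of the identity and $\partial_x\mathscr{H}_0(0)$. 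Consequently $L_\lambda$ is block-diagonal in the Fourier basis: on each mode $n\in\mathbf{m}\mathbb{N}^*$ it reduces to a $2\times 2$ matrix $M_n(c,\sigma,\gamma)$ whose vanishing determinant is the single dispersion relation
$$n\,(2c+\gamma)^{2}=2\sigma(n^{2}-1)+\gamma^{2}(2-n).$$
Solving for $c$, $\sigma$ and $\gamma$ at $n=\mathbf{m}$ recovers exactly the critical values $c_\mathbf{m}^\pm$, $\sigma_\mathbf{m}$, $\gamma_\mathbf{m}^\pm$ of the theorem; the hypotheses (a)--(b) in (i) and the bound $\mathbf{m}>2\gamma^2/((2c+\gamma)^2+\gamma^2)$ in (ii) are precisely what make the relevant square root (resp. $\sigma_\mathbf{m}$) real and positive.

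It remains to check the hypotheses of Crandall--Rabinowitz. A direct algebraic manipulation of the dispersion relation shows that another mode $n=k\mathbf{m}$, $k\geqslant 2$, satisfies it at $\lambda=\lambda_*$ if and only if $(\gamma^2-\sigma)/(\sigma\mathbf{m}^2)$ is a positive integer in case (i), and an analogous condition holds in case (ii); the non-resonance assumptions \eqref{cond c thm} and \eqref{condition sigma thm} are therefore exactly what is needed for $\ker L_{\lambda_*}$ to be one-dimensional, spanned by a single Fourier pair at mode $\mathbf{m}$. Fredholmness of index zero follows because the principal symbol (dominated by the curvature contribution $\sigma n^{2}$) is a positive-order Fourier multiplier while the lower-order part is a compact perturbation. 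The transversality condition reduces to $\partial_\lambda\det M_\mathbf{m}(\lambda_*)\neq 0$, which is immediate from the explicit quadratic form of the dispersion (and in case (i) is equivalent to $c_\mathbf{m}^+\neq c_\mathbf{m}^-$, guaranteed by strict positivity of the discriminant).

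The main obstacle is the linearization step: differentiating the singular integral operators $\mathscr{H}_0(\eta)[1]$ and $\mathscr{D}_0(\eta)[\gamma]$ at the circle, justifying the manipulations under the principal value, and extracting the precise Fourier symbols, including the constant contributions that would otherwise shift the dispersion relation. Once these symbols are in hand, the rest is a clean application of Crandall--Rabinowitz together with elementary algebra on the quadratic dispersion.
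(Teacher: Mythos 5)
Your proposal is correct and follows essentially the same route as the paper: recasting \eqref{eq:KH4} as the zero set of a functional, computing the linearization at $(0,0)$ as a Fourier multiplier with $2\times2$ blocks $M_n$, ruling out spectral collisions via \eqref{cond c thm} and \eqref{condition sigma thm}, obtaining Fredholmness by splitting off the invertible principal multiplier plus a compact remainder, and applying Crandall--Rabinowitz; your dispersion relation $n(2c+\gamma)^2=2\sigma(n^2-1)+\gamma^2(2-n)$ agrees with \eqref{determinant}. The only cosmetic difference is that you verify transversality through $\partial_\lambda\det M_{\mathbf{m}}(\lambda_*)\neq0$, which is equivalent to the paper's duality argument with the adjoint kernel vector $y_{0}$ since each block $M_{\mathbf{m}}$ has rank exactly one at the bifurcation point.
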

	\begin{remark} Let us make some remarks.
		\begin{enumerate}
			\item The condition \eqref{cond c thm} is satisfied in particular for $$\frac{\gamma^2}{\sigma}\in\mathbb{R}\setminus\mathbb{Q}.$$
			\item Notice that formally $\sigma_{\mathbf{m}}(0,\gamma)=\frac{\gamma^2}{\mathbf{m}+1},$ which is in accordance with the point $(iii)$. Nevertheless, the condition  \eqref{condition sigma thm} is not compatible with $c=0$ but is satisfied in particular for $$\left(\frac{2c}{\gamma}+1\right)^2\in\mathbb{R}\setminus\mathbb{Q}.$$
		\end{enumerate}
	\end{remark}
	The Theorem \ref{thm bif vortex sheet} is obtained by means of local bifurcation theory, more precisely by applying Crandall-Rabinowitz Theorem \ref{thm CR}. For this aim, we see solutions to \eqref{eq:KH4} as zeros of a nonlinear functional $\mathcal{F}$ (defined by \eqref{def F1}-\eqref{def F2}):
	$$\mathcal{F}(c,\sigma,\gamma,\eta,\psi)=0.$$
	We prove in Lemma \ref{lemma trivial sol} that $(\eta,\psi)=(0,0)$ is indeed a solution. Then, in Proposition \ref{prop regularity}, we study the regularity of $\mathcal{F}$ and we give in Proposition \ref{prop lin op} the expression of its differential $\mathcal{L}_{c,\sigma,\gamma}$ at the trivial solution $(\eta,\psi)=(0,0).$ We dispose of three parameters to bifurcate: the speed of rotation $c$, the surface tension coefficient $\sigma$ and the mean vorticity $\gamma.$ The corresponding analysis are led in Subsections \ref{subsec c}, \ref{subsec sigma} and \ref{subsec gamma}, respectively. Let us emphasize that the restrictions \eqref{cond c thm} and \eqref{condition sigma thm} are made for avoiding spectral collisions and get a one dimensional kernel for $\mathcal{L}_{c,\sigma,\gamma}.$ The transversality conditions are obtained via a duality argument as developed in \cite{GHR2023,HHRZ24,Roulley2023a,Roulley2023c}.
	
	\subsection{Known results}
	
	The study of the motion of two immiscible fluids separated by a sharp interface has been a subject of study since the mid-19th century, when von Helmholtz derived the evolution equations for such phenomenon in 1858 in \cite{Helmholtz1858}. When there is no surface tension and no gravity the problem is generally ill-posed in Sobolev spaces, even at linear level (cf. \cite[Section 9.3]{MB02}), this phenomenon, known as {\it Kelvin-Helmholtz instability}, lead to the formation of vortices or waves at the interface or within the shear layer. It is named after Lord Kelvin (William Thomson) and Hermann von Helmholtz, who independently investigated the instability in the late 19th century.\\
	
	When the fluid interface is a perturbation of the flat rest state and there are no surface tension effects ($ \sigma=0 $) stability in the analytic framework was proved in \cite{SS85,SSBF81,HH03}. Adding the effects of the surface tension to \eqref{Euler} ($\sigma>0$) induces stabilizing effects and the resulting equations are well-posed, linearly \cite{HLS97}, and nonlinearly \cite{Ambrose03,AM07,Ambrose07,CCS08,SZ11,CCG2012}, even in presence of two fluids with different densities. A more comprehensive stability criterion has been investigated by D. Lannes in \cite{Lannes13} in the context of two fluids with nonzero density, when gravitational effects are taken in consideration. We highlight that when we have a shear flow the presence of the surface tension is crucial in order to avoid the growth of high-modes, that is characteristically induced by the Kelvin-Helmholtz instability, otherwise the equations are ill-posed \cite{Wu06,L02,CO1989}, in sharp contrast with what happens in the context of the Water-Waves equations. We mention as well the foundational work of Delort \cite{Delort1991} which proved the existence of global weak solutions for measured-valued vorticities that include the vortex sheet problem. \\
	
	The stability results discussed are constrained by an existence time proportional to the capillarity coefficient $\sigma$, leaving the long-term stability of \eqref{eq:KH3} largely unexplored. When $ \sigma =0 $, circles and lines are non-trivial steady solutions, as are segments of length $2a$ with strength $\omega(x) = \Omega\sqrt{a^2-x^2}$, rotating uniformly at angular velocity $\Omega$. This category also includes the family of solutions identified by B. Protas and T. Sakajo \cite{PS20} and the solutions \cite{CQZ2023a,CQZ2023b} of Cao-Qin-Zou. When $\Gamma(t)$ is a closed curve and $\sigma = 0$, recent findings \cite{GSPSY22} have demonstrated that steady solutions bifurcate from degenerate eigenvalues. In \cite{GSPSY21}, the same authors proved rigidity results, namely the non-existence of non-trivial solutions for a certain range of angular speed. Over the past decades, bifurcation techniques have been succesfully implemented to obtain traveling periodic solutions for instance, for the 3D liquid drop model with capillarity \cite{BJLM24} or in the vortex patch class where a broad spectrum of steady solutions for CDEs were discovered, starting with Burbea's seminal work \cite{Burbea82} and extended in \cite{HM13,HMW20,GHM2024,CCG2016,CCG2019,HMV15,HM16b,HM16a,DHHM16,HHMV16}. The previous list is non-exhaustive since restricted to the Eulerian model. Let us mention the recent generalization to a large class of models \cite{HXX2023}. This manuscript adds to this literature by presenting an existence result for the Kelvin-Helmholtz equations near a circular vortex with surface tension effects. The combined contributions of our study and \cite{GSPSY22} establish, for the first time, the presence of closed curves as global solutions to Kelvin-Helmholtz equations.\\
	
	\noindent\textbf{Ackowledgments:} Federico Murgante is supported by the ERC STARTING GRANT 2021 "Hamiltonian Dynamics, Normal Forms and Water Waves" (HamDyWWa), Project Number: 101039762. Emeric Roulley is supported by PRIN 2020 "Hamiltonian and Dispersive PDEs" project number: 2020XB3EFL. Stefano Scrobogna is supported by PRIN 2022 "Turbulent effects vs Stability in Equations from Oceanography" (TESEO), project number: 2022HSSYPN.
	
	\section{Proof of the results}
	This section is devoted to the proof of Theorem \ref{thm bif vortex sheet}. We first introduce the functional of interest and give the expression of its differential at the circular distribution. Then, we study the bifurcations with respect to the various parameters of the problem.
	\subsection{Functional of interest and its linear operator}
	Let us introduce the functional
	$$\mathcal{F}=(\mathcal{F}_1,\mathcal{F}_2),$$
	where 
	\begin{align}
		\mathcal{F}_1(c,\sigma,\gamma,\eta,\psi)&\triangleq
		c\,\eta_x+\frac{1}{2} \mathscr{H}(\eta)[\psi_x]+\frac{\gamma}{2} \mathscr{H}(\eta)[1],\label{def F1}\\
		\mathcal{F}_2(c,\sigma,\gamma,\eta,\psi)&\triangleq
		c\,\psi_x+\frac{\psi_x+\gamma}{2}\mathscr{D}_0(\eta)[\psi_x+\gamma]+\sigma\mathscr{K}(\eta).\label{def F2}
	\end{align}
	Then, clearly solutions of \eqref{eq:KH4} are zeros of the functional $\mathcal{F}$ and we have the following result stating that the circular distribution is indeed a trivial solution for any values of the parameters.
	\begin{lem}\label{lemma trivial sol}
		The state $(\eta,\psi)=(0,0)$ is a trivial solution. More precisely,
		$$\forall(c,\sigma,\gamma)\in\mathbb{R}\times(0,\infty)\times\mathbb{R},\quad\mathcal{F}(c,\sigma,\gamma,0,0)=0.$$
		This corresponds to
		\begin{equation}\label{stat:v_form}
			\omega\equiv\gamma,\qquad\Omega^{-}(t)=D(0,1)\qquad\textnormal{and}\qquad\begin{cases}
				u^{-}(\mathbf{x})=0, & \textnormal{if }\mathbf{x}\in D(0,1),\\
				u^{+}(\mathbf{x})=\gamma  \frac{\mathbf{x}^\perp}{|\mathbf{x}|^2}, & \textnormal{if }\mathbf{x}\in\mathbb{R}^2\setminus\overline{D(0,1)}.
		\end{cases}\end{equation}
	\end{lem}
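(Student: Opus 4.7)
The approach is a pure direct verification. I would split the argument into three small pieces: evaluating $\mathcal{F}_1$ at the trivial point, evaluating $\mathcal{F}_2$ at the trivial point, and recovering the geometric picture \eqref{stat:v_form} from the Biot--Savart law.

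For $\mathcal{F}_1$, observe that $\eta_x\equiv 0$ when $\eta=0$, so \eqref{def H} collapses to $\mathscr{H}(0)[\omega]=\mathscr{H}_0(0)[\omega]$, and with $\psi_x=0$ only the constant-source term $\tfrac{\gamma}{2}\mathscr{H}_0(0)[1]$ survives in \eqref{def F1}. The kernel in \eqref{def H0} at $\eta=0$ with $\omega\equiv 1$ simplifies to $\tfrac{\sin(x-y)}{2(1-\cos(x-y))}=\tfrac{1}{2}\cot\!\left(\tfrac{x-y}{2}\right)$, which is odd around $y=x$ on $\mathbb{T}$; its principal-value integral therefore vanishes, giving $\mathcal{F}_1(c,\sigma,\gamma,0,0)=0$.

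For $\mathcal{F}_2$, at $\eta=0$ the kernel in \eqref{def D0} reduces identically to $1$, so $\mathscr{D}_0(0)[\gamma]=\gamma$; and substituting $R=1$, $\eta_x=\eta_{xx}=0$ into \eqref{def K} gives $\mathscr{K}(0)=-1$ without any computation. Plugging these into \eqref{def F2} leaves $\mathcal{F}_2(c,\sigma,\gamma,0,0)=\tfrac{\gamma^2}{2}-\sigma$, a constant function of $x$. As the text emphasizes just above \eqref{eq:KH3}, the $\psi$-equation was obtained by integrating the $\omega$-equation in $x$, hence is defined only modulo a time-dependent additive constant; equivalently, $\mathcal{F}_2$ is to be read in the quotient by constants (or post-composed with projection onto zero-mean functions), and in that sense $\mathcal{F}_2(c,\sigma,\gamma,0,0)=0$.

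For the geometric description, insert $z(y)=e^{\ii y}$ and $\omega\equiv\gamma$ into the Biot--Savart formula \eqref{eq:upm_BS}. The resulting Cauchy-type integral on the unit circle can be evaluated by a short complex-variable computation (the two-dimensional analogue of Newton's shell theorem): it equals $0$ for $|\mathbf{x}|<1$ and $\mathbf{x}^\perp/|\mathbf{x}|^2$ for $|\mathbf{x}|>1$, producing the announced formulas for $u^\pm$. No step poses a real obstacle: everything reduces to elementary identities for the kernels \eqref{def D0}--\eqref{def K} at $\eta=0$. The only conceptual point worth flagging is the modulo-constants convention for $\mathcal{F}_2$, which is intrinsic to the derivation of \eqref{eq:KH3} and must be kept in mind when later linearizing the functional.
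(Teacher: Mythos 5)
Your proof is correct and follows essentially the same route as the paper's: direct evaluation of the kernels at $\eta=0$ (so that $\mathcal{F}_1$ vanishes by oddness of the cotangent kernel, i.e. $\mathcal{H}[1]=0$, and $\mathcal{F}_2$ reduces to a constant, hence zero under the modulo-additive-constants convention), followed by a residue/Cauchy-integral computation of the Biot--Savart velocity. Incidentally, your value $\mathscr{K}(0)=-1$ is the one actually consistent with \eqref{def K} (the paper's displayed $\mathscr{K}(0)=1$ is a harmless sign slip, since only the constancy of $\mathcal{F}_2$ matters), and the stray factor $2$ in your written kernel $\tfrac{\sin(x-y)}{2(1-\cos(x-y))}$ is likewise immaterial.
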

	\begin{proof}
		From \eqref{def D0} and \eqref{def K}, one readily gets
		\begin{equation}\label{D00 K0}
			\mathscr{D}_0(0)[f](x)=\int_{\mathbb{T}}f(y)\textnormal{d} y,\qquad\mathscr{K}(0)=1.
		\end{equation}
		Then, using the classical relations
		\begin{equation}\label{trigo}
			\sin(u)=2\sin\left(\tfrac{u}{2}\right)\cos\left(\tfrac{u}{2}\right),\qquad1-\cos(u)=2\sin^2\left(\tfrac{u}{2}\right),
		\end{equation}
		we obtain from \eqref{def H} and \eqref{def H0}
		\begin{align}\label{H0}
			\mathscr{H}(0)[f](x)=\mathscr{H}_0(0)[f](x)=\textnormal{p.v.}\int_{\mathbb{T}}\frac{\sin(x-y)}{1-\cos(x-y)}f(y)\textnormal{d} y=\textnormal{p.v.}\int_{\mathbb{T}}\cot\left(\frac{x-y}{2}\right)f(y)\textnormal{d} y=\mathcal{H}[f](x),
		\end{align}
		where $\mathcal{H}$ denotes the $2\pi$-periodic Hilbert transform. Inserting the foregoing calculations into \eqref{def F1}-\eqref{def F2} gives
		$$\mathcal{F}_1(c,\sigma,\gamma,0,0)=\frac{\gamma}{2}\mathscr{H}(0)[1]=\frac{\gamma}{2}\mathcal{H}[1]=0$$
		and
		$$\mathcal{F}_2(c,\sigma,\gamma,0,0)=\frac{\gamma^2}{2}\mathscr{D}_0(0)[1]+\sigma\mathscr{K}(0)=\frac{\gamma^2}{2}+\sigma\sim0.$$
		Recall that the second equation is well-defined modulo constants. 
		Now let us turn to the fluid description. We employ the usual identification $\R^2\simeq \C$. For $(\eta,\psi)=0$, \eqref{graph} and \eqref{eq:upm_BS} imply that
		$$\omega\equiv\gamma\qquad\textnormal{and}\qquad z(x)=e^{\ii x}.$$
		Note that 
		\begin{equation} \label{eq:zx}
			\frac{z_x(x)}{\ii z(x)}\equiv1.
		\end{equation}
		We compute the complex conjugate of the velocity field for $\mathbf{x}\in \C\setminus\partial D(0,1)$ which, in view of \eqref{def:psi}, is given by the following holomorphic function
		\begin{align*}
			\overline{u^{\pm}(\mathbf{x})}&=\frac{\gamma}{2\pi\ii}\int_{0}^{2\pi}\frac{1}{\mathbf{x}-z(x)}dx\stackrel{\eqref{eq:zx}}{=}\frac{\gamma}{\ii}\frac{1}{2\pi\ii}\int_{0}^{2\pi}\frac{z_x(x)}{(\mathbf{x}-z(x))z(x)}dx=\frac{\gamma}{\ii}\frac{1}{2\pi\ii}\int_{\partial D(0,1)}\frac{dz}{(\mathbf{x}-z)z}=\begin{cases}
				0,&\mathbf{x}\in D(0,1),\\
				\frac{\gamma}{\ii\,\mathbf{x}}, & \mathbf{x}\not\in D(0,1),
			\end{cases}
		\end{align*}
		where, to obtain the last equality, we used the residue Theorem. Taking the complex conjugate, we get 
		$$u^{-}(\mathbf{x})=0,\qquad u^{+}(\mathbf{x})=\gamma\frac{\ii}{\overline{\mathbf{x}}}=\gamma\frac{\ii\mathbf{x}}{|\mathbf{x}|^2}=\gamma\frac{\mathbf{x}^\perp}{|\mathbf{x}|^2}\cdot$$
		This ends the proof of Lemma \ref{lemma trivial sol}. 
	\end{proof}
	We consider the following $\mathbf{m}$-fold ($\mathbf{m}\in\mathbb{N}^*$) Sobolev spaces with regularity index $s\geqslant0,$
	\begin{align*}
		H_{\textnormal{\tiny{even}},\mathbf{m}}^{s}&\triangleq
		\left\lbrace f(x)=\sum_{n=1}^{\infty}a_n\cos(n\mathbf{m}x),\quad\sum_{n=1}^{\infty}\langle n\rangle^{2s}|a_n|^2<\infty,\quad a_n\in\mathbb{R}\right\rbrace,\\
		H_{\textnormal{\tiny{odd}},\mathbf{m}}^{s}&\triangleq
		\left\lbrace f(x)=\sum_{n=1}^{\infty}b_n\sin(n\mathbf{m}x),\quad\sum_{n=1}^{\infty}\langle n\rangle^{2s}|b_n|^2<\infty,\quad b_n\in\mathbb{R}\right\rbrace,
	\end{align*}
	where we have used the classical notation $\langle n\rangle\triangleq\max(1,n).$ Then we set
	\begin{align*}
		X_{\mathbf{m}}^s\triangleq H_{\textnormal{\tiny{even}},\mathbf{m}}^{s+\frac{1}{4}}\times H_{\textnormal{\tiny{odd}},\mathbf{m}}^{s-\frac{1}{4}},\qquad Y_{\mathbf{m}}^{s}\triangleq
		H_{\textnormal{\tiny{odd}},\mathbf{m}}^{s-\frac{5}{4}}\times H_{\textnormal{\tiny{even}},\mathbf{m}}^{s-\frac{7}{4}}
	\end{align*}
	and for $r>0$,
	$$\mathcal{B}_{\mathbf{m}}^{s}(r)\triangleq
	\Big\{(\eta,\psi)\in X_{\mathbf{m}}^{s}\quad\textnormal{s.t.}\quad\|\eta\|_{H^{s+\frac{1}{4}}}+\|\psi\|_{H^{s-\frac{1}{4}}}<r\Big\}.$$
	In the next proposition, we state the regularity of the functional $\mathcal{F}$ with respect to these function spaces.
	\begin{prop}\label{prop regularity}
		Let $\mathbf{m}\in\mathbb{N}^*$ there exists $s_0>0$ such that for any $s\geqslant s_0$ there exists $r=r\pare{s}>0$ such that the functional 
		$$\mathcal{F}:\mathbb{R}\times(0,\infty)\times\mathbb{R}\times \mathcal{B}_{\mathbf{m}}^s(r)\longrightarrow Y_{\mathbf{m}}^{s}$$ 
		defined in \eqref{def F1}-\eqref{def F2}
		is well defined and of class $C^1.$
	\end{prop}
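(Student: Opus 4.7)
The plan is to verify separately that $\mathcal{F}(c,\sigma,\gamma,\eta,\psi)\in Y_\mathbf{m}^s$ for $(\eta,\psi)\in\mathcal{B}_\mathbf{m}^s(r)$ and that the dependence on all five arguments is $C^1$. For the range issue I would start with the $\mathbf{m}$-fold and parity constraints: performing the changes of variables $y\mapsto -y$ and $y\mapsto y+\tfrac{2\pi}{\mathbf{m}}$ in \eqref{def D0}--\eqref{def H0} and using that $\cos$ is even and $\sin$ is odd, one checks that whenever $\eta$ is even and $\mathbf{m}$-fold the operator $\mathscr{D}_0(\eta)$ preserves the $\mathbf{m}$-fold symmetry and satisfies $\mathscr{D}_0(\eta):\mathrm{even}\to\mathrm{even}$ and $\mathrm{odd}\to\mathrm{odd}$, while $\mathscr{H}_0(\eta)$ interchanges parities. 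Tracking these symmetries through \eqref{def F1}--\eqref{def F2} with $\eta_x$ odd, $\psi_x$ even and $\mathscr{K}(\eta)$ even gives $\mathcal{F}_1\in H_{\mathrm{odd},\mathbf{m}}^{\tau}$ and $\mathcal{F}_2\in H_{\mathrm{even},\mathbf{m}}^{\tau}$ at the formal level.

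The Sobolev mapping property requires choosing $s_0$ so that $s+\tfrac14>\tfrac32$. Then $H^{s+1/4}(\mathbb{T})$ is a Banach algebra, $R=\sqrt{1+2\eta}$ stays in a neighborhood of $1$ for $\|\eta\|_{H^{s+1/4}}<r$ with $r$ small, and standard Moser-type composition estimates for $C^\infty$ nonlinearities apply. The curvature $\mathscr{K}(\eta)$ is a smooth function of $\eta$, $\eta_x$, $\eta_{xx}$, $R$, whose least regular ingredient is $\eta_{xx}\in H^{s-7/4}$, so $\mathscr{K}(\eta)\in H^{s-7/4}$ follows directly. For the singular integrals the plan is the standard kernel decomposition: one writes
$$1+\eta(x)+\eta(y)-\sqrt{1+2\eta(x)}\sqrt{1+2\eta(y)}\cos(x-y)=2\sin^2\!\left(\tfrac{x-y}{2}\right)\bigl(1+A(x,y;\eta)\bigr)+B(x,y;\eta),$$
where $A$ is smooth in $(x,y,\eta)$ and vanishes at $\eta=0$, and $B$ vanishes to sufficient order on the diagonal $x=y$ (using $\sqrt{1+2\eta(x)}\sqrt{1+2\eta(y)}=1+\eta(x)+\eta(y)+O(\eta^2)$). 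Dividing by this denominator and pairing with the numerators of \eqref{def H0} and \eqref{def D0} reduces $\mathscr{H}_0(\eta)[f]$ and $\mathscr{D}_0(\eta)[f]$ to $\mathcal{H}\bigl[M(\eta)f\bigr]+\mathcal{S}(\eta)[f]$, where $M(\eta)$ is a smooth multiplier and $\mathcal{S}(\eta)$ is a smoothing remainder. Since $\mathcal{H}$ is bounded on every $H^\tau(\mathbb{T})$, this gives $\mathscr{H}_0(\eta),\mathscr{D}_0(\eta):H^\tau\to H^\tau$ for the relevant indices; multiplying by $\eta_x$ or $\psi_x$ and invoking the algebra property yields $\mathcal{F}_1\in H^{s-5/4}$ and $\mathcal{F}_2\in H^{s-7/4}$.

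For the $C^1$ claim, the dependence on $(c,\sigma,\gamma)$ is affine and trivially smooth. The Fréchet derivative in $(\eta,\psi)$ is obtained by formal differentiation inside the integrals \eqref{def D0}--\eqref{def H0} and inside \eqref{def K}; each resulting kernel has exactly the structure treated above (smooth prefactor times a Hilbert-type singularity, plus a smoothing remainder), now depending linearly on the increments. Continuity of this differential, and thus $C^1$ regularity, follows from Lipschitz estimates on the smooth building blocks $A$, $B$, $M(\eta)$ and on the composition giving $\mathscr{K}(\eta)$ with respect to $\eta\in H^{s+1/4}$.

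The main obstacle is producing the kernel decomposition above with quantitative, $\eta$-uniform control of $A$, $B$ and of the resulting multiplier $M(\eta)$ in the Sobolev norms required, on a small ball of $H^{s+1/4}$. Once this decomposition is in hand, the rest of the proof is bookkeeping of Sobolev indices using the algebra property, Moser's inequality, composition estimates for $C^\infty$ functions, and the boundedness of $\mathcal{H}$ on $H^\tau(\mathbb{T})$.
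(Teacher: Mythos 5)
Your proposal is correct and follows essentially the same strategy as the paper: check parity and $\mathbf{m}$-fold symmetry by changes of variables, handle $\mathscr{K}(\eta)$ by composition estimates, and split each singular integral into the flat Hilbert-type operator with an $\eta$-dependent coefficient plus a remainder whose kernel loses its singularity because the difference quotients of $\eta$ vanish to first order on the diagonal. The paper implements your decomposition concretely by writing the kernel as an explicit analytic function $\mathsf{K}_z$ of $\frac{\Delta_z\eta}{1+2\eta}$ and Taylor-expanding to order one in the integration variable, which yields the quasilinear leading term $\bigl(1+\mathsf{K}^0\bigl(\tfrac{\eta_x}{1+2\eta}\bigr)\bigr)|D|\psi$ --- the same coefficient $\tfrac{1}{1+(\eta_x/(1+2\eta))^2}$ your $M(\eta)$ would produce --- so your sketch fills in along identical lines.
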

	\begin{proof}
		Assume $\eta$ and $\psi$ respectively even and odd (in space). Then $\eta_x$ and $\psi_{x}$ are respectively odd and even. Now by performing a change of variables $y\mapsto-y$ we can easily see from \eqref{def D0} and \eqref{def H0} that 
		$$f\textnormal{ even}\quad\Rightarrow\quad\Big(\mathscr{D}_0(\eta)[f]\textnormal{ even}\quad\textnormal{and}\quad\mathscr{H}_0(\eta)[f]\textnormal{ odd}\Big).$$
		Coming back to the expression \eqref{def H}, we see that
		$$f\textnormal{ even}\quad\Rightarrow\quad\mathscr{H}(\eta)[f]\textnormal{ odd}.$$
		With this in hand, we deduce from \eqref{def F1}-\eqref{def F2} that
		$$\mathcal{F}_1(c,\sigma,\gamma,\eta,\psi)\textnormal{ is odd}\qquad\textnormal{and}\qquad\mathcal{F}_2(c,\sigma,\gamma,\eta,\psi)\textnormal{ is even.}$$
		The $\mathbf{m}$-fold preserving property follows similarly by using the change of variables $y\mapsto y+\tfrac{2\pi}{\mathbf{m}}.$\\
		
		Since the parity and $ {\bf m} $-fold symmetry is preserved as explained above, using \eqref{def H}, \eqref{def F1} and \eqref{def F2} the result follows if we can prove that given $(\eta,\psi)\in\mathcal{B}_{\bf m}^s(r) $ then $\mathscr{H}_0(\eta)[\psi_x], \mathscr{D}_0 (\eta)[\psi_x] $ are in $ H^{s-\frac{5}{4}} $, the $ C^1 $ differentiability shall stem since the integrand functions that define the operators $ \mathscr{H}_0 $ and $ \mathscr{D}_0 $ are analytic in $ \eta $ and $ \psi $. We prove the statement for $ \mathscr{H}_0(\eta)[\psi_x] $ being the  one for $ \mathscr{D}_0(\eta)[\psi_x] $ similar. Notice that
		\begin{equation*}
			\mathscr{H}_0\pare{\eta}\bra{\psi_x} = |D| \psi + 2\, \text{p.v.} \int_{\mathbb{T}} \pare{ \mathsf{K} _z \pare{\frac{\Delta_z\eta}{1+2\eta}} -  1 } \frac{\psi_x \pare{x-z}}{2\tan\pare{z/2}} \text{d} z, 
		\end{equation*}
		where
		$$\Delta_{z}\eta\triangleq \frac{\eta(x)-\eta(x-z)}{2\sin\pare{z/2}}$$
		and
		\begin{equation}\label{eq:Kz}
			\mathsf{K}_z \pare{\mathsf{X}} 
			\triangleq 
			\frac{\sqrt{1-4\mathsf{X} \ \sin\pare{z/2} }}{2\pare{1-2\mathsf{X} \ \sin\pare{z/2}-\sqrt{1-4\mathsf{X} \ \sin\pare{z/2}}\cos z}} \ \pare{ 2\sin\pare{z/2} }^2 \ . 
		\end{equation}
		Notice that  $ \pare{z, \mathsf{X}}\mapsto \mathsf{K}_z\pare{\mathsf{X}} $ is analytic in $ (-\pi,\pi ) \times \pare{-\frac{1}{4}, \frac{1}{4}} $.
		In particular we can Taylor-expand in $ z $ the application $  z\mapsto\mathsf{K} _z \pare{\frac{\Delta_z\eta}{1+2\eta}} -  1  $ and we obtain that
		\begin{equation*}
			\mathsf{K} _z \pare{\frac{\Delta_z\eta}{1+2\eta}} -  1 =\mathsf{K}^0\pare{\frac{\eta_x}{1+2\eta} } + R^1\pare{\eta; x, z}, 
		\end{equation*}
		where
		\begin{align}
			\mathsf{K}^0\pare{\mathsf{X}}  & \triangleq
			- \frac{\mathsf{X}^2}{1 + \mathsf{X}^2}, \label{eq:k0}
			\\
			R^1\pare{\eta; x, z} & \triangleq
			\pare{ \mathsf{K} _z \pare{\frac{\Delta_z\eta}{1+2\eta}} -  1 } -\mathsf{K}^0\pare{\frac{\eta_x}{1+2\eta} }. \nonumber
		\end{align}
		We thus have that
		\begin{equation}\label{H0 remainder}
			\mathscr{H}_0\pare{\eta}\bra{\psi_x} = \pare{1+\mathsf{K}^0\pare{\frac{\eta_x}{1+2\eta}} } |D| \psi
			+
			\underbrace{2\, \text{p.v.} \int_{\mathbb{T}} R^1\pare{\eta; x, z}  \frac{\psi_x \pare{x-z}}{2\tan\pare{z/2}} \text{d} z}_{\triangleq
				R_{\mathscr{H}_0}\pare{\eta}\bra{\psi_x}}.  
		\end{equation}
		Applying standard Moser tame estimates and composition theorems to \eqref{eq:k0} it is immediate that there exists $ s_0 > 0 $ so that for any $ s\geqslant s_0 $ there exists a $ r=r\pare{s} > 0 $ so that
		\begin{equation}\label{C1 quasilinear H0}
			\pare{\eta, \psi}\mapsto  \pare{1+\mathsf{K}^0\pare{\frac{\eta_x}{1+2\eta}} } |D| \psi \in C^1\pare{\mathcal{B}_{\bf m}^s(r); H^{s-\frac{5}{4}}}, 
		\end{equation}
		thus we can focus our attention on the remainder term $ R_{\mathscr{H}_0}\pare{\eta}\bra{\psi_x} $ in \eqref{H0 remainder}. We have that, if $ s > 7/4 $
		\begin{equation}\label{RH0bound1}
			\left\|R_{\mathscr{H}_0}\pare{\eta}\bra{\psi_x}\right\| _{H^{s-\frac{5}{4}}}\lesssim \left\| \frac{R^1\pare{\eta;x,z}}{2\tan\pare{z/2}}\right\|_{L^\infty_z H^{s-\frac{5}{4}}_x} \left\|\psi\right\| _{H^{s-\frac{1}{4}}}
		\end{equation}
		Since $ R^1\pare{\eta;x, z} $ is a Taylor-1 remainder its explicit expression is given by
		\begin{equation}\label{eq:R1_explicit}
			R^1\pare{\eta;x,z}=z\int_0^1 \pare{ \partial_z \mathsf{K}_{\vartheta z}\pare{\frac{\Delta_{\vartheta z}\eta}{1+2\eta}} + \mathsf{K}' _{\vartheta z}\pare{\frac{\Delta_{\vartheta z}\eta}{1+2\eta}} \eta'\pare{x-\vartheta z} }d\vartheta, 
		\end{equation}
		so that from \eqref{eq:R1_explicit} and the fact that $ \partial_z\mathsf{K}_z\pare{0}=0 $ it is clear that if $ \|\eta\|_{H^{s-\frac{1}{4}}} \ll 1 $ then
		\begin{equation}\label{RH0bound2}
			\sup_{z\in\mathbb{T}}	\left\| \frac{R^1\pare{\eta;\bullet,z}}{2\tan\pare{z/2}}\right\|_{ H^{s-\frac{5}{4}}}\lesssim\|\eta\|_{H^{s-\frac{1}{4}}}, 
		\end{equation}
		so that \eqref{RH0bound2} and \eqref{RH0bound1} prove that (after a relabeling of $s_0,s$ and $r$, if needed)
		\begin{equation}\label{C0 RH0}
			\pare{\eta, \psi}\mapsto  R_{\mathscr{H}_0}\pare{\eta}\bra{\psi_x}  \in C^0 \pare{\mathcal{B}_{\bf m}^s(r); H^{s-\frac{5}{4}}}.
		\end{equation}
		The fact that the application (after a relabeling of $ s_0,s $ and $ r $, if needed)
		\begin{equation}
			\label{C1 RH0}
			\pare{\eta, \psi}\mapsto  R_{\mathscr{H}_0}\pare{\eta}\bra{\psi_x}  \in C^1\pare{\mathcal{B}_{\bf m}^s(r); H^{s-\frac{5}{4}}}.
		\end{equation}
		can be proved by computations similar to the ones performed to prove \eqref{C0 RH0}, and are omitted for the sake of brevity. Combining \eqref{C1 quasilinear H0}, \eqref{C1 RH0} and \eqref{H0 remainder}, we obtain that (after a relabeling of $ s_0,s $ and $ r $, if needed)
		\begin{equation}
			\label{C1 H0}
			\pare{\eta, \psi}\mapsto \mathscr{H}_0\pare{\eta}\bra{\psi_x}  \in C^1\pare{\mathcal{B}_{\bf m}^s(r); H^{s-\frac{5}{4}}}.
		\end{equation}
		Computations similar to the ones used to prove \eqref{C1 H0} show us that
		\begin{equation}
			\label{C1 D0}
			\pare{\eta, \psi}\mapsto \mathscr{D}_0\pare{\eta}\bra{\psi_x}  \in C^1\pare{\mathcal{B}_{\bf m}^s(r); H^{s-\frac{5}{4}}}.
		\end{equation}
		Putting together \eqref{C1 D0} and \eqref{C1 H0} proves, combined with standard product estimates, that, fixed $c,\gamma$ and $\sigma$
		\begin{equation}\label{reg partielle}
			\pare{\eta, \psi} \mapsto \big(\mathcal{F}_1(c,\sigma,\gamma,\eta,\psi) , \mathcal{F}_2(c,\sigma,\gamma,\eta,\psi) - \sigma\mathscr{K}(\eta)\big) \in C^1\pare{\mathcal{B}^s_{{\bf m}}\pare{r};  H^{s-\frac{5}{4}}_{\text{odd}, {\bf m}} \times  H^{s-\frac{5}{4}}_{\text{even}, {\bf m}} }.
		\end{equation}
		In addition, it is clear from \eqref{def K} and composition estimates that for $\|\eta\|_{H^{s+\frac{1}{4}}}\ll1,$ 
		\begin{equation}\label{reg scrK}
			\|\mathscr{K}(\eta)\|_{H^{s-\frac{7}{4}}}<\infty.
		\end{equation}
		From \eqref{reg partielle} and \eqref{reg scrK}, we deduce that
		$$\pare{\eta, \psi} \mapsto \big(\mathcal{F}_1(c,\sigma,\gamma,\eta,\psi) , \mathcal{F}_2(c,\sigma,\gamma,\eta,\psi)\big) \in C^1\pare{\mathcal{B}^s_{{\bf m}}\pare{r};  H^{s-\frac{5}{4}}_{\text{odd}, {\bf m}} \times  H^{s-\frac{7}{4}}_{\text{even}, {\bf m}} }.$$
		Thus, since the differentiability in  $c,\gamma$ and $\sigma$ is immediate from \eqref{def F1}-\eqref{def F2}, we conclude the desired result. 
	\end{proof}
	
	Our next goal is to linearize the system \eqref{eq:KH4} at the trivial solution $(\eta,\psi)=(0,0).$ The corresponding operator has a good Fourier multiplier structure and enjoys Fredholmness property with respect to the function spaces introduced above. More precisely, we have the following proposition.
	\begin{prop}\label{prop lin op}
		Let $(c,\sigma,\gamma)\in\mathbb{R}\times(0,\infty)\times\mathbb{R}.$ We denote $$\mathcal{L}_{c,\sigma,\gamma}\triangleq
		d_{\eta,\psi}\mathcal{F}(c,\sigma,\gamma,0,0).$$
		\begin{enumerate}[label=(\roman*)]
			\item The operator $\mathcal{L}_{c,\sigma,\gamma}$ writes
			\begin{equation}\label{matrix calL}
				\mathcal{L}_{c,\sigma,\gamma}=\begin{pmatrix}
					\left(c+\tfrac{\gamma}{2}\right)\partial_{x} & \tfrac{1}{2}|D|\vspace{0.2cm}\\
					\sigma-\gamma^2+\tfrac{\gamma^2}{2}|D|-\sigma|D|^2& \left(c+\tfrac{\gamma}{2}\right)\partial_{x}
				\end{pmatrix}.
			\end{equation}
			In particular, it is a Fourier multiplier. Its action on $(\hat{\eta},\hat{\psi}),$ admitting the Fourier expansions
			$$\hat{\eta}(x)=\sum_{n=1}^{\infty}a_n\cos(n\mathbf{m}x),\qquad\hat{\psi}(x)=\sum_{n=1}^{\infty}b_n\sin(n\mathbf{m}x),\qquad a_n,b_n\in\mathbb{R},$$
			is given by
			$$\mathcal{L}_{c,\sigma,\gamma}\begin{pmatrix}
				\hat{\eta}\\
				\hat{\psi}
			\end{pmatrix}(x)=\sum_{n=1}^{\infty}\begin{pmatrix}
				\sin(n\mathbf{m}x) & 0\\
				0 & \cos(n\mathbf{m}x)
			\end{pmatrix}M_{n\mathbf{m}}(c,\sigma,\gamma)\begin{pmatrix}
				a_n\\
				b_n
			\end{pmatrix},$$
			with
			\begin{equation}\label{def Mn}
				M_n(c,\sigma,\gamma)\triangleq
				\begin{pmatrix}
					-\left(c+\frac{\gamma}{2}\right)n & \frac{n}{2}\vspace{0.2cm}\\
					\sigma-\gamma^2+\frac{\gamma^2}{2}n-\sigma n^2 & \left(c+\frac{\gamma}{2}\right)n
				\end{pmatrix}.
			\end{equation}
			\item The operator
			$\mathcal{L}_{c,\sigma,\gamma}:X_{\mathbf{m}}^{s}\longrightarrow Y_{\mathbf{m}}^{s}$ is Fredholm with zero index.
		\end{enumerate}
	\end{prop}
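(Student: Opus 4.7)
The plan is to establish (i) by direct computation of the Gâteaux derivatives at $(0,0)$, and then deduce (ii) from the block-Fourier structure that (i) exposes. For the second column of \eqref{matrix calL}, I would differentiate the only $\psi$-dependent pieces of $\mathcal{F}_1, \mathcal{F}_2$. Using the baseline identities $\mathscr{H}(0)=\mathcal{H}$ and $\mathscr{D}_0(0)[f]=\int_\mathbb{T} f$ from Lemma \ref{lemma trivial sol}, the classical relation $\mathcal{H}\partial_x = |D|$, and the fact that $\int_\mathbb{T}\hat\psi_x = 0$, the linearizations of $\tfrac{1}{2}\mathscr{H}(\eta)[\psi_x]$ and of $c\psi_x + \tfrac{\psi_x+\gamma}{2}\mathscr{D}_0(\eta)[\psi_x+\gamma]$ collapse immediately to $\tfrac{1}{2}|D|\hat\psi$ and $(c+\tfrac{\gamma}{2})\hat\psi_x$, respectively.

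For the first column, three $\eta$-linearizations at $\eta=0$ are needed. Splitting $\mathscr{H}(\eta)[f] = \eta_x\mathscr{D}_0(\eta)[f]+\mathscr{H}_0(\eta)[f]$, a first-order Taylor expansion of the integrand of $\mathscr{H}_0(\eta)[1]$ shows that the $O(\eta)$ perturbation factors identically in numerator and denominator, while the unperturbed part is annihilated by $\mathcal{H}[1]=0$; hence $d_\eta\mathscr{H}(0)[1]\hat\eta = \hat\eta_x$. For $\mathscr{D}_0(\eta)[1]$, the analogous expansion, combined with the rewriting $\cos(x-y)/(1-\cos(x-y)) = -1+(1-\cos(x-y))^{-1}$ and the Fejér-kernel identity $(2\pi)^{-1}\int_0^{2\pi}(1-\cos(nz))/(1-\cos z)\,\textnormal{d}z = n$, yields $d_\eta\mathscr{D}_0(0)[1]\hat\eta = (|D|-2)\hat\eta$. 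Expanding \eqref{def K} to first order with $R^2=1+2\eta$ produces $d_\eta\mathscr{K}(0)\hat\eta = \hat\eta+\hat\eta_{xx} = (1-|D|^2)\hat\eta$. Assembling these with the prefactors $\gamma/2$, $\gamma^2/2$, $\sigma$ reproduces the first column of \eqref{matrix calL}, and the Fourier action is then read off by evaluating each multiplier entry on $\cos(n\mathbf{m}x)$ and $\sin(n\mathbf{m}x)$ and packaging the coefficients into $M_{n\mathbf{m}}$.

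For (ii), the key computation is $\det M_n(c,\sigma,\gamma) = \tfrac{\sigma}{2}n^3 + O(n^2)$, so (using $\sigma>0$) the matrix $M_{n\mathbf{m}}$ is invertible for all but finitely many $n\in\mathbb{N}^*$; let $N_0$ be the largest such index. I would then split $X_\mathbf{m}^s = X_{\rm low}\oplus X_{\rm high}$ and $Y_\mathbf{m}^s = Y_{\rm low}\oplus Y_{\rm high}$ along $n\leqslant N_0$ via the natural Fourier projection. From the explicit formula for $M_n^{-1}$, all entries decay like $O(n^{-2})$ except the $(2,1)$ entry which decays like $O(n^{-1})$; matching these decays against the relative regularity gaps $(+\tfrac14,-\tfrac14)$ on the source and $(-\tfrac54,-\tfrac74)$ on the target shows that $\mathcal{L}_{c,\sigma,\gamma}|_{X_{\rm high}}: X_{\rm high}\to Y_{\rm high}$ is a bounded isomorphism. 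Since $\dim X_{\rm low}=\dim Y_{\rm low}=2N_0$, the low-mode restriction is a linear map between finite-dimensional spaces of equal dimension, hence of index zero, and Fredholmness with vanishing index for the full operator follows by summation. The main delicate step I expect is the identification $d_\eta\mathscr{D}_0(0)[1]\hat\eta=(|D|-2)\hat\eta$, where the singular kernel $1/(1-\cos(x-y))$ must be correctly separated from the regular $-1$ contribution so that both the $|D|$ and the $-2$ are produced with the right coefficients; once isolated, each piece reduces to a standard Fourier identity.
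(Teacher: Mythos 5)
Your proposal is correct. Part (i) is essentially the paper's own computation: the same three $\eta$-linearizations, namely $d_\eta\mathscr{H}_0(0)[\hat\eta][f]=0$ (relative perturbations of numerator and denominator cancel), $d_\eta\mathscr{D}_0(0)[\hat\eta][1]=(|D|-2)\hat\eta$ and $d_\eta\mathscr{K}(0)[\hat\eta]=(\mathrm{Id}-|D|^2)\hat\eta$, assembled with the same prefactors; the only cosmetic difference is that you evaluate the singular kernel $\tfrac12\,\textnormal{p.v.}\int_{\mathbb{T}}\big(\hat\eta(x)-\hat\eta(y)\big)\sin^{-2}\big(\tfrac{x-y}{2}\big)\,\textnormal{d}y$ mode by mode via the Fej\'er identity, whereas the paper recognizes it as $\partial_x\mathcal{H}=|D|$ acting on $\hat\eta$. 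For part (ii) you take a genuinely different route: the paper decomposes $\mathcal{L}_{c,\sigma,\gamma}=I_\sigma+K_{c,\sigma,\gamma}$, where $I_\sigma$ is the antidiagonal part built from $\tfrac12|D|$ and $-\sigma|D|^2$ (an isomorphism $X^s_{\mathbf m}\to Y^s_{\mathbf m}$) and $K_{c,\sigma,\gamma}$ gains half a derivative, hence is compact by Rellich--Kondrachov, and it concludes by stability of the Fredholm index under compact perturbation. You instead exploit the mode-by-mode structure directly: $\det M_n=\tfrac{\sigma}{2}n^3+O(n^2)$ with $\sigma>0$ gives invertibility for all but finitely many modes, and the decay rates of the entries of $M_{n\mathbf m}^{-1}$ ($O(n^{-2})$ except $O(n^{-1})$ in the $(2,1)$ slot) match exactly the regularity gaps encoded in $X^s_{\mathbf m}$ and $Y^s_{\mathbf m}$, so the high-mode block is a bounded isomorphism while the low-mode block is a square finite-dimensional map. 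Both arguments are sound; the paper's is shorter and avoids bookkeeping of inverse-entry decay, while yours is more self-contained (no compactness or index-stability theorem is invoked) and makes transparent why the exponents $s+\tfrac14$, $s-\tfrac14$, $s-\tfrac54$, $s-\tfrac74$ are the natural ones. The one step you should state explicitly is that the low/high splitting is invariant under $\mathcal{L}_{c,\sigma,\gamma}$ --- which holds precisely because the operator is a Fourier multiplier preserving each mode --- so that kernel, cokernel and hence index add over the two blocks.
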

	\begin{proof}
		$(i)$ First observe that from the expression \eqref{def K}, one readily gets
		\begin{equation}\label{diff K0}
			d_{\eta}\mathscr{K}(0)[\hat{\eta}]=\hat{\eta}+\hat{\eta}_{xx}=(\textnormal{Id}-|D|^2)\eta.
		\end{equation}
		Then, differentiating in \eqref{def H0}, we infer
		\begin{equation}\label{diff H0}
			d_{\eta}\mathscr{H}_0(0)[\hat{\eta}][f](x)=0.
		\end{equation}
		Now, differentiating in \eqref{def D0} and using one more time \eqref{trigo} yields
		\begin{equation}\label{diff D0}
			\begin{aligned}
				d_{\eta}\mathscr{D}_0(0)[\hat{\eta}][f](x)&=-\textnormal{p.v.}\int_{\mathbb{T}}\frac{\big(\hat{\eta}(y)-\hat{\eta}(x)\big)\cos(x-y)}{1-\cos(x-y)}f(y)\textnormal{d} y-\int_{\mathbb{T}}\big(\hat{\eta}(x)+\hat{\eta}(y)\big)f(y)\textnormal{d} y\\
				&=\frac{1}{2}\textnormal{p.v.}\int_{\mathbb{T}}\frac{\hat{\eta}(x)-\hat{\eta}(y)}{\sin^2\left(\frac{x-y}{2}\right)}f(y)\textnormal{d} y-2\hat{\eta}(x)\int_{\mathbb{T}}f(y)\textnormal{d} y.
			\end{aligned}
		\end{equation}
		For later purposes, we shall study the case $f=1$. Using Fourier symbols representation, we see that
		\begin{equation}\label{modD dxH}
			|D|=\partial_{x}\mathcal{H}=\mathcal{H}\partial_x.
		\end{equation}
		In addition, since $\cot'(x)=-\frac{1}{\sin^2(x)}$ and $\mathcal{H}[1]=0,$ we get
		\begin{equation}\label{calc dxH}
			\begin{aligned}
				\partial_{x}\mathcal{H}[f](x)&=\partial_{x}\mathcal{H}[f](x)-f(x)\partial_{x}\mathcal{H}[1](x)\\
				&=\textnormal{p.v.}\int_{\mathbb{T}}\partial_{x}\left[\cot\left(\frac{x-y}{2}\right)\right]\big(f(y)-f(x)\big)\textnormal{d} y\\
				&=\frac{1}{2}\textnormal{p.v.}\int_{\mathbb{T}}\frac{f(x)-f(y)}{\sin^2\left(\frac{x-y}{2}\right)}\textnormal{d} y.
			\end{aligned}
		\end{equation}
		Putting together \eqref{diff D0}, \eqref{modD dxH} and \eqref{calc dxH} yields
		\begin{equation}\label{diff D01}
			d_{\eta}\mathscr{D}_0(0)[\hat{\eta}][1]=|D|\hat{\eta}-2\hat{\eta}.
		\end{equation}
		Combining \eqref{def H} and \eqref{D00 K0} and \eqref{diff H0}, we get
		\begin{equation}\label{diff H}
			d_{\eta}\mathscr{H}(0)[\hat{\eta}][f]=\hat{\eta}_x\mathscr{D}_0(0)[f]+d_{\eta}\mathscr{H}_0(0)[f]=\hat{\eta}_{x}\int_{\mathbb{T}}f(y)\textnormal{d} y.
		\end{equation}
		As a consequence, differentiating \eqref{def F1} and using \eqref{H0}, \eqref{modD dxH} and \eqref{diff H} implies
		\begin{equation}\label{diff F1}
			\begin{aligned}
				d_{\eta,\psi}\mathcal{F}_1(c,\sigma,\gamma,0,0)[\hat{\eta},\hat{\psi}]&=c\,\hat{\eta}_x+\frac{1}{2}\mathscr{H}(0)[\hat{\psi}_x]+\frac{\gamma}{2}d_{\eta}\mathscr{H}(0)[\hat{\eta}][1]\\
				&=c\,\hat{\eta}_x+\frac{1}{2} \mathcal{H}\partial_{x}\hat{\psi}+\frac{\gamma}{2}\hat{\eta}_x\\
				&=\left(c+\frac{\gamma}{2}\right)\partial_{x}\hat{\eta}+\frac{|D|}{2}\hat{\psi}.
			\end{aligned}
		\end{equation}
		Besides, differentiating \eqref{def F2} and making appeal to \eqref{D00 K0}, \eqref{diff K0} and \eqref{diff D01}, we find
		\begin{equation}\label{diff F2}
			\begin{aligned}
				d_{\eta,\psi}\mathcal{F}_2(c,\sigma,\gamma,0,0)[\hat{\eta},\hat{\psi}]&=c\,\hat{\psi}_x+\frac{\gamma}{2}\hat{\psi}_x\mathscr{D}_0(0)[1]+\frac{\gamma^2}{2}d_{\eta}\mathscr{D}_0(0)[\hat{\eta}][1]+\frac{\gamma}{2}\mathscr{D}_0(0)[\hat{\psi}_x]+\sigma d_{\eta}\mathscr{K}(0)[\hat{\eta}]\\
				&=\left(c+\frac{\gamma}{2}\right)\partial_x\hat{\psi}+(\sigma-\gamma^2)\hat{\eta}+\frac{\gamma^2}{2}|D|\hat{\eta}-\sigma|D|^2\hat{\eta}.
			\end{aligned}
		\end{equation}
		Putting together \eqref{diff F1} and \eqref{diff F2}, we obtain the matrix representation \eqref{matrix calL} for the linearized operator.\\
		
		\noindent $(ii)$ Coming back to the expression \eqref{matrix calL}, we decompose the operator as follows
		$$\mathcal{L}_{c,\sigma,\gamma}=I_{\sigma}+K_{c,\sigma,\gamma},$$
		where
		\begin{align*}
			I_{\sigma}\triangleq
			\begin{pmatrix}
				0 & \frac{1}{2}|D|\\
				-\sigma|D|^2 & 0
			\end{pmatrix},\qquad K_{c,\sigma,\gamma}\triangleq
			\begin{pmatrix}
				\left(c+\tfrac{\gamma}{2}\right)\partial_{x} & 0\vspace{0.2cm}\\
				\sigma-\gamma^2+\tfrac{\gamma^2}{2}|D|& \left(c+\tfrac{\gamma}{2}\right)\partial_{x}
			\end{pmatrix}.
		\end{align*}
		Clearly, $I_{\sigma}:X_{\mathbf{m}}^{s}\to Y_{\mathbf{m}}^{s}$ is an isomorphism. In addition, $K_{c,\sigma,\gamma}:X_{\mathbf{m}}^{s}\to Y_{\mathbf{m}}^{s+\frac{1}{2}}$ is continuous. By Rellich-Kondrachov Theorem, we deduce that $K_{c,\sigma,\gamma}:X_{\mathbf{m}}^{s}\to Y_{\mathbf{m}}^{s}$ is a compact operator. This proves the claim by applying \cite[Cor. 5.9]{CR21}.
	\end{proof}
	\subsection{Bifurcation from $c$}\label{subsec c}
	In this subsection, we fix $\sigma>0$ and $\gamma\in\mathbb{R}$ (some restrictions will be imposed later on) and study the bifurcation from the parameter $c.$ Let us look for the values of the parameter $c$ such that the matrix $M_n(c,\sigma,\gamma),$ introduced in \eqref{def Mn}, is singular. For this aim, we compute its determinant
	\begin{equation}\label{determinant}
		\det\big(M_n(c,\sigma,\gamma)\big)=-\left(c+\frac{\gamma}{2}\right)^2n^2+\frac{n}{4}\left(2\sigma n^2-\gamma^2n+2(\gamma^2-\sigma)\right).
	\end{equation}
	We study the sign on $[1,\infty)$ of the polynomial function
	$$n\mapsto2\sigma n^2-\gamma^2n+2(\gamma^2-\sigma).$$
	The associated discriminant is
	$$\Delta=\gamma^4-16\sigma\gamma^2+16\sigma^2=(\gamma^2-8\sigma)^2-48\sigma^2.$$
	Hence, 
	$$2\sigma n^2-\gamma^2n+2\gamma^2>0\quad\Leftrightarrow\quad(n,\sigma,\gamma)\in\mathcal{S}\triangleq
	\mathcal{S}_1\cup\mathcal{S}_2,$$
	where
	\begin{align*}
		\mathcal{S}_1&\triangleq
		\big\{(n,\sigma,\gamma)\in\mathbb{N}^*\times(0,\infty)\times\mathbb{R}\quad\textnormal{s.t.}\quad4\sigma(2-\sqrt{3})<\gamma^2<4\sigma(2+\sqrt{3})\big\},\\
		\mathcal{S}_2&\triangleq
		\left\{(n,\sigma,\gamma)\in\mathbb{N}^*\times(0,\infty)\times\mathbb{R}\quad\begin{aligned}
			&\textnormal{s.t.}\quad\gamma^2\in[0,\infty)\setminus\left[4\sigma(2-\sqrt{3}),4\sigma(2+\sqrt{3})\right]\\
			&\textnormal{and}\quad 
			n\in\mathbb{R}\setminus[m_-(\sigma,\gamma),m_+(\sigma,\gamma)],\\
			&\textnormal{with}\quad m_{\pm}(\sigma,\gamma)\triangleq
			\frac{\gamma^2}{4\sigma}\pm\frac{1}{4\sigma}\sqrt{(\gamma^2-8\sigma)^2-48\sigma^2}
		\end{aligned}\right\}.
	\end{align*}
	Then, for $(n,\sigma,\gamma)\in\mathcal{S},$ we have
	\begin{equation}\label{def cnpm}
		\det\big(M_n(c,\sigma,\gamma)\big)=0\quad\Leftrightarrow\quad c=c_n^{\pm}(\sigma,\gamma)\triangleq
		-\frac{\gamma}{2}\pm\frac{1}{2}\sqrt{2\sigma n-\gamma^2+\frac{2(\gamma^2-\sigma)}{n}}\cdot
	\end{equation}
	\begin{prop}\label{prop hyp CR c}
		Let $(\mathbf{m},\sigma,\gamma)\in\mathcal{S}$ satisfying the additional condition
		\begin{equation}\label{cond c}
			\frac{\gamma^2-\sigma}{\sigma\mathbf{m}^2}\not\in\mathbb{N}^*.
		\end{equation}
		Then, the following properties hold true.
		\begin{enumerate}[label=(\roman*)]
			\item The kernel of the operator $\mathcal{L}_{c_{\mathbf{m}}^{\pm}(\sigma,\gamma),\sigma,\gamma}$ is one dimensional. More precisely
			$$\ker\left(\mathcal{L}_{c_{\mathbf{m}}^{\pm}(\sigma,\gamma),\sigma,\gamma}\right)=\mathtt{span}(x_{0,\sigma,\gamma,\mathbf{m}}^{\pm}),\qquad x_{0,\sigma,\gamma,\mathbf{m}}^{\pm}(x)\triangleq
			\begin{pmatrix}
				\cos(\mathbf{m}x)\\
				\pm\sqrt{2\sigma\mathbf{m}-\gamma^2+\frac{2(\gamma^2-\sigma)}{\mathbf{m}}}\sin(\mathbf{m}x)
			\end{pmatrix}.$$
			\item The operator $\mathcal{L}_{c_{\mathbf{m}}^{\pm}(\sigma,\gamma),\sigma,\gamma}:X_{\mathbf{m}}^{s}\longrightarrow Y_{\mathbf{m}}^{s}$ is Fredholm with zero index.
			\item The transversality condition holds, namely
			\begin{equation}\label{transversality}
				\partial_c\mathcal{L}_{c,\sigma,\gamma}|_{c=c_{\mathbf{m}}^{\pm}(\sigma,\gamma)}\left[x_{0,\sigma,\gamma,\mathbf{m}}^{\pm}\right]\not\in R\left(\mathcal{L}_{c_{\mathbf{m}}^{\pm}(\sigma,\gamma),\sigma,\gamma}\right).
			\end{equation}
		\end{enumerate}
	\end{prop}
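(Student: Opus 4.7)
The plan is to use the Fourier multiplier structure of $\mathcal{L}_{c,\sigma,\gamma}$ established in Proposition \ref{prop lin op}, which reduces the entire proposition to algebraic statements about the $2\times 2$ matrices $M_{n\mathbf{m}}(c,\sigma,\gamma)$ defined in \eqref{def Mn}.

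For part (i), I would start by evaluating the determinant \eqref{determinant} at $c=c_{\mathbf{m}}^{\pm}(\sigma,\gamma)$. Using the relation $(c_{\mathbf{m}}^{\pm}+\gamma/2)^{2}=\tfrac{1}{4}\bigl(2\sigma\mathbf{m}-\gamma^{2}+2(\gamma^{2}-\sigma)/\mathbf{m}\bigr)$ coming from \eqref{def cnpm} and factoring out $(n-1)$, a direct simplification should give
\begin{equation*}
\det\bigl(M_{n\mathbf{m}}(c_{\mathbf{m}}^{\pm}(\sigma,\gamma),\sigma,\gamma)\bigr)=\frac{n\mathbf{m}(n-1)}{2}\bigl[\sigma\mathbf{m}^{2}n-(\gamma^{2}-\sigma)\bigr].
\end{equation*}
This vanishes at $n=1$ by design, and under the non-resonance hypothesis \eqref{cond c} the bracketed factor is nonzero for every $n\geqslant 2$, so each $M_{n\mathbf{m}}$ is invertible for $n\geqslant 2$. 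Hence the kernel of $\mathcal{L}_{c_{\mathbf{m}}^{\pm}(\sigma,\gamma),\sigma,\gamma}$ is concentrated on the first harmonic; solving $M_{\mathbf{m}}(c_{\mathbf{m}}^{\pm}(\sigma,\gamma),\sigma,\gamma)(a_{1},b_{1})^{T}=0$ from the first row yields $b_{1}=2(c_{\mathbf{m}}^{\pm}+\gamma/2)a_{1}$, which after normalization gives exactly the generator $x_{0,\sigma,\gamma,\mathbf{m}}^{\pm}$.

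Part (ii) is an immediate consequence of Proposition \ref{prop lin op}(ii), which already establishes Fredholmness with vanishing index for every parameter triple. For part (iii), I would differentiate \eqref{matrix calL} in $c$ to obtain
\begin{equation*}
\partial_{c}\mathcal{L}_{c,\sigma,\gamma}=\begin{pmatrix}\partial_{x} & 0 \\ 0 & \partial_{x}\end{pmatrix},
\end{equation*}
so that $\partial_{c}\mathcal{L}_{c,\sigma,\gamma}|_{c=c_{\mathbf{m}}^{\pm}(\sigma,\gamma)}[x_{0,\sigma,\gamma,\mathbf{m}}^{\pm}]$ is supported purely on the first Fourier mode, with coefficient vector $(-\mathbf{m},2\alpha^{\pm}\mathbf{m})$, where $\alpha^{\pm}\triangleq c_{\mathbf{m}}^{\pm}+\gamma/2$. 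The range of $\mathcal{L}_{c_{\mathbf{m}}^{\pm}(\sigma,\gamma),\sigma,\gamma}$ on this mode coincides with the image of the singular matrix $M_{\mathbf{m}}(c_{\mathbf{m}}^{\pm},\sigma,\gamma)$, which is one-dimensional and (reading off its second column) spanned by $(1,2\alpha^{\pm})$. Membership of $(-\mathbf{m},2\alpha^{\pm}\mathbf{m})$ in this line would force $\alpha^{\pm}\mathbf{m}=0$; since $(\mathbf{m},\sigma,\gamma)\in\mathcal{S}$ makes the radicand $2\sigma\mathbf{m}-\gamma^{2}+2(\gamma^{2}-\sigma)/\mathbf{m}$ strictly positive, we have $\alpha^{\pm}\neq 0$, so \eqref{transversality} follows. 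Equivalently, one can phrase this as a non-vanishing duality pairing with the cokernel generator obtained from $\ker M_{\mathbf{m}}^{T}(c_{\mathbf{m}}^{\pm},\sigma,\gamma)$, in line with the references cited in the introduction.

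The main bookkeeping step is the explicit factorization of the determinant, as everything downstream follows cleanly from it together with the Fourier multiplier description of $\mathcal{L}_{c,\sigma,\gamma}$. The subtlety to keep in mind is that two independent non-degeneracy conditions come into play: \eqref{cond c} rules out spectral collisions on modes $n\geqslant 2$ and thereby secures the one-dimensional kernel, while the hypothesis $(\mathbf{m},\sigma,\gamma)\in\mathcal{S}$ simultaneously guarantees that the branches $c_{\mathbf{m}}^{\pm}$ are real and distinct and that $\alpha^{\pm}\neq 0$, on which transversality ultimately rests.
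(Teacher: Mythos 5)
Your proposal is correct, and its overall architecture (reduction to the $2\times2$ Fourier blocks $M_{n\mathbf{m}}$, determinant analysis for the kernel, citing Proposition \ref{prop lin op}-$(ii)$ for Fredholmness, and a first-mode non-degeneracy check for transversality) coincides with the paper's. There are two points where your route differs in a way worth noting. For $(i)$, the paper analyzes spectral collisions by solving $c_{\mathbf{m}}^{\kappa_1}=c_{k\mathbf{m}}^{\kappa_2}$, which forces a preliminary case discussion on whether $(k\mathbf{m},\sigma,\gamma)\in\mathcal{S}$ (otherwise $c_{k\mathbf{m}}^{\kappa_2}$ is $-\gamma/2$ or non-real); your direct factorization $\det\big(M_{n\mathbf{m}}(c_{\mathbf{m}}^{\pm},\sigma,\gamma)\big)=\tfrac{n\mathbf{m}(n-1)}{2}\big[\sigma\mathbf{m}^{2}n-(\gamma^{2}-\sigma)\big]$ is algebraically equivalent (I checked it) but sidesteps that case analysis entirely, since the polynomial identity holds regardless of membership in $\mathcal{S}$, and \eqref{cond c} kills the bracket for all $n\geqslant2$. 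For $(iii)$, the paper first proves the full characterization $R(\mathcal{L}_{c_{\mathbf{m}}^{\pm},\sigma,\gamma})=\mathtt{span}(y_{0,\sigma,\gamma,\mathbf{m}}^{\pm})^{\perp}$, which requires combining the easy inclusion with the codimension-one count coming from Fredholmness and the one-dimensional kernel; your argument only uses the easy direction, namely that any element of the range has its first-mode coefficient vector in $\mathrm{Im}\big(M_{\mathbf{m}}(c_{\mathbf{m}}^{\pm},\sigma,\gamma)\big)=\mathtt{span}\big((1,2\alpha^{\pm})\big)$, and then checks that $(-\mathbf{m},2\alpha^{\pm}\mathbf{m})$ lies off that line because $\alpha^{\pm}\neq0$. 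This is slightly more elementary and is logically equivalent to the paper's non-vanishing pairing $\pm2\mathbf{m}\sqrt{2\sigma\mathbf{m}-\gamma^{2}+\tfrac{2(\gamma^{2}-\sigma)}{\mathbf{m}}}\neq0$ against the cokernel generator; what the paper's fuller argument buys is the explicit description of the range, which is reused verbatim in Propositions \ref{prop hyp CR sigma} and \ref{prop hyp CR gamma}.
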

	\begin{proof} 
		$(i)$ Let us study the spectral collisions of the $\mathbf{m}$-fold spectrum. More precisely, we shall solve
		\begin{equation}\label{eq spectral collision}
			c_{\mathbf{m}}^{\kappa_1}(\sigma,\gamma)=c_{k\mathbf{m}}^{\kappa_2}(\sigma,\gamma),\qquad (\kappa_1,\kappa_2)\in\{-,+\}^2,\qquad k\in\mathbb{N}^*.
		\end{equation}
		First observe that there can exist values of $k\in\mathbb{N}\setminus\{0,1\}$ such that $(k\mathbf{m},\sigma,\gamma)\not\in\mathcal{S}$. In this case, $c_{k\mathbf{m}}^{\kappa_2}(\sigma,\gamma)$ is either equal to $-\frac{\gamma}{2}$ or an element of $\mathbb{C}\setminus\mathbb{R}$. In both cases the equation \eqref{eq spectral collision} is not satisfied. So we can restrict the discussion to the case where $(k\mathbf{m},\sigma,\gamma)\in\mathcal{S}.$
		Coming back to the expression of $c_n^{\pm}(\sigma,\gamma)$ in \eqref{def cnpm}, the equation \eqref{eq spectral collision} implies
		$$\sigma\mathbf{m}+\frac{\gamma^2-\sigma}{\mathbf{m}}=\sigma k\mathbf{m}+\frac{\gamma^2-\sigma}{k\mathbf{m}}$$
		or equivalently
		$$\sigma\mathbf{m}^2(k-1)\left(k-\frac{\gamma^2-\sigma}{\sigma\mathbf{m}^2}\right)=\sigma\mathbf{m}^2k^2-\left(\sigma\mathbf{m}^2+\gamma^2-\sigma\right)k+\gamma^2-\sigma=0.$$
		Therefore, there are two solutions
		$$k_1=1,\qquad k_2=\frac{\gamma^2-\sigma}{\sigma\mathbf{m}^2}\cdot$$
		Since $c_{\mathbf{m}}^{-}(\sigma,\gamma)\neq c_{\mathbf{m}}^{+}(\sigma,\gamma),$ the case $k=k_1=1$ corresponds to the trivial solution. Thanks to the condition \eqref{cond c}, we deduce that the equation \eqref{eq spectral collision} has no non-trivial solution. As a consequence
		$$\det\Big(M_{\mathbf{m}}\big(c_{\mathbf{m}}^{\pm}(\sigma,\gamma),\sigma,\gamma\big)\Big)=0$$
		and
		$$\forall n\in\mathbb{N}\setminus\{0,1\},\quad\det\Big(M_{n\mathbf{m}}\big(c_{\mathbf{m}}^{\pm}(\sigma,\gamma),\sigma,\gamma\big)\Big)\neq0.$$
		This implies the one dimensional kernel property for the linearized operator and the generator is obtain by remarking that
		$$\begin{pmatrix}
			1\\
			\pm\sqrt{2\sigma\mathbf{m}-\gamma^2+\frac{2(\gamma^2-\sigma)}{\mathbf{m}}}
		\end{pmatrix}\in\ker\Big(M_{\mathbf{m}}\big(c_{\mathbf{m}}^{\pm}(\sigma,\gamma),\sigma,\gamma\big)\Big).$$
		
		\noindent $(ii)$ Follows from Proposition \ref{prop lin op}-$(ii).$\\
		
		\noindent $(iii)$ We introduce on $Y_{\mathbf{m}}^s$ the scalar product $\langle\cdot,\cdot\rangle$ defined as follows: for $(f,g)$ and $(\widetilde{f},\widetilde{g})$ in $Y_{\mathbf{m}}^s$ admitting the Fourier representations
		$$f(x)=\sum_{n=1}^{\infty}a_n\sin(n\mathbf{m}x),\qquad\widetilde{f}(x)=\sum_{n=1}^{\infty}\widetilde{a}_n\sin(n\mathbf{m}x)$$
		and
		$$g(x)=\sum_{n=1}^{\infty}b_n\cos(n\mathbf{m}x),\qquad\widetilde{g}(x)=\sum_{n=1}^{\infty}\widetilde{b}_n\cos(n\mathbf{m}x),$$
		with $a_n,\widetilde{a}_n,b_n,\widetilde{b}_n\in\mathbb{R}$, their scalar product is given by
		$$\langle(f,g),(\widetilde{f},\widetilde{g})\rangle\triangleq\sum_{n=1}^{\infty}a_n\widetilde{a}_n+b_n\widetilde{b}_n.$$
		Define
		$$y_{0,\sigma,\gamma,\mathbf{m}}^{\pm}(x)\triangleq
		\begin{pmatrix}
			\mp\sqrt{2\sigma\mathbf{m}-\gamma^2+\frac{2(\gamma^2-\sigma)}{\mathbf{m}}}\sin(\mathbf{m}x)\\
			\cos(\mathbf{m}x)
		\end{pmatrix}.$$
		Let us prove that 
		\begin{equation}\label{range}
			R\left(\mathcal{L}_{c_{\mathbf{m}}^{\pm}(\sigma,\gamma),\sigma,\gamma}\right)=\mathtt{span}\left(y_{0,\sigma,\gamma,\mathbf{m}}^{\pm}\right)^{\perp_{\langle\cdot,\cdot\rangle}}.
		\end{equation}
		Take an element $y\in R\left(\mathcal{L}_{c_{\mathbf{m}}^{\pm}(\sigma,\gamma),\sigma,\gamma}\right).$ By construction, 
		$$y(x)=\sum_{n=1}^{\infty}\begin{pmatrix}
			\sin(n\mathbf{m}x) & 0\\
			0 & \cos(n\mathbf{m}x)
		\end{pmatrix}M_{n\mathbf{m}}(c,\sigma,\gamma)\begin{pmatrix}
			a_n\\
			b_n
		\end{pmatrix}.$$
		Then,
		\begin{align*}
			\left\langle y,y_0^{\pm}\right\rangle&=\left\langle M_{\mathbf{m}}\left(c_{\mathbf{m}}^{\pm}(\sigma,\gamma),\sigma,\gamma\right)\begin{pmatrix}
				a_n\\
				b_n
			\end{pmatrix},\begin{pmatrix}
				\mp\sqrt{2\sigma\mathbf{m}-\gamma^2+\frac{2(\gamma^2-\sigma)}{\mathbf{m}}}\\
				1
			\end{pmatrix}\right\rangle_{\mathbb{R}^2}\\
			&=\left\langle \begin{pmatrix}
				a_n\\
				b_n
			\end{pmatrix},M_{\mathbf{m}}^{\top}\left(c_{\mathbf{m}}^{\pm}(\sigma,\gamma),\sigma,\gamma\right)\begin{pmatrix}
				\mp\sqrt{2\sigma\mathbf{m}-\gamma^2+\frac{2(\gamma^2-\sigma)}{\mathbf{m}}}\\
				1
			\end{pmatrix}\right\rangle_{\mathbb{R}^2}\\
			&=0.
		\end{align*}
		The last identity is obtained because by construction
		$$\begin{pmatrix}
			\mp\sqrt{2\sigma\mathbf{m}-\gamma^2+\frac{2(\gamma^2-\sigma)}{\mathbf{m}}}\\
			1
		\end{pmatrix}\in\ker\Big(M_{\mathbf{m}}^{\top}\big(c_{\mathbf{m}}^{\pm}(\sigma,\gamma),\sigma,\gamma\big)\Big).$$
		Recall that the notation $M^{\top}$ denotes the transposed of the matrix $M.$ This proves that
		\begin{equation}\label{first inclusion claim}
			R\left(\mathcal{L}_{c_{\mathbf{m}}^{\pm}(\sigma,\gamma),\sigma,\gamma}\right)\subset\mathtt{span}\left(y_{0,\sigma,\gamma,\mathbf{m}}^{\pm}\right)^{\perp_{\langle\cdot,\cdot\rangle}}.
		\end{equation}
		Now, since the space $\mathtt{span}\left(y_{0,\sigma,\gamma,\mathbf{m}}^{\pm}\right)$ is of finite dimension, then we can apply the orthogonal supplementary Theorem in the pre-Hilbertian space $(Y_{\mathbf{m}}^{s},\langle\cdot,\cdot\rangle)$ to get
		$$Y_{\mathbf{m}}^{s}=\mathtt{span}\left(y_{0,\sigma,\gamma,\mathbf{m}}^{\pm}\right)\overset{\perp}{\oplus}\mathtt{span}\left(y_{0,\sigma,\gamma,\mathbf{m}}^{\pm}\right)^{\perp_{\langle\cdot,\cdot\rangle}}.$$
		This proves that $\mathtt{span}\left(y_{0,\sigma,\gamma,\mathbf{m}}^{\pm}\right)^{\perp_{\langle\cdot,\cdot\rangle}}$ is of codimension one in $Y_{\mathbf{m}}^{s}.$ Besides, the points $(i)$ and $(ii)$ give that $R\left(\mathcal{L}_{c_{\mathbf{m}}^{\pm}(\sigma,\gamma),\sigma,\gamma}\right)$ is also of codimension one in $Y_{\mathbf{m}}^{s}.$ Together with the inclusion \eqref{first inclusion claim}, we conclude \eqref{range}. With this in hand, we can now check the transversality condition. Notice that, from \eqref{matrix calL}, we get
		$$\partial_{c}\mathcal{L}_{c,\sigma,\gamma}|_{c=c_{\mathbf{m}}^{\pm}(\sigma,\gamma)}=\begin{pmatrix}
			\partial_{x} & 0\\
			0 & \partial_{x}
		\end{pmatrix}.$$
		Thus, a straitforward computation together with the fact that $(\mathbf{m},\sigma,\gamma)\in\mathcal{S}$ give
		\begin{align*}
			\left\langle\partial_c\mathcal{L}_{c,\sigma,\gamma}|_{c=c_{\mathbf{m}}^{\pm}(\sigma,\gamma)}\left[x_{0,\sigma,\gamma,\mathbf{m}}^{\pm}\right],y_{0,\sigma,\gamma,\mathbf{m}}^{\pm}\right\rangle=\pm2\mathbf{m}\sqrt{2\sigma\mathbf{m}-\gamma^2+\frac{2(\gamma^2-\sigma)}{\mathbf{m}}}\neq0.
		\end{align*}
		According to \eqref{range}, this proves the transversality condition \eqref{transversality} and achieves the proof of Proposition \ref{prop hyp CR c}.
	\end{proof}
	\begin{proof}[Proof of Theorem \ref{thm bif vortex sheet}-$(i)$]
		We apply Theorem \ref{thm CR} together with Lemma \ref{lemma trivial sol} and Propositions \ref{prop regularity} and \ref{prop hyp CR c}.
	\end{proof}
	\subsection{Bifurcation from $\sigma$}\label{subsec sigma}
	In this subsection, we fix $(c,\gamma)\in\mathbb{R}^2$ (some restrictions will be imposed later on) and study the bifurcation from the parameter $\sigma.$ According to \eqref{determinant}, we have
	\begin{equation}\label{det M1}
		\det\big(M_1(c,\sigma,\gamma)\big)=-c(c+\gamma)
	\end{equation}
	is independent of $\sigma$ and, for any $n\geqslant2,$
	\begin{equation}\label{def sigman}
		\det\big(M_n(c,\sigma,\gamma)\big)=0\quad\Leftrightarrow\quad \sigma=\sigma_n(c,\gamma)\triangleq
		\frac{\left[\left(2c+\gamma\right)^2+\gamma^2\right]n-2\gamma^2}{2(n^2-1)}\cdot
	\end{equation}
	In the sequel, we denote
	$$\alpha(c,\gamma)\triangleq
	\left(2c+\gamma\right)^2+\gamma^2,\qquad\beta(\gamma)\triangleq2\gamma^2.$$
	The condition $\sigma>0$ requires
	$$n\alpha(c,\gamma)-\beta(\gamma)>0,\qquad\textnormal{i.e.}\qquad n>N(c,\gamma)\triangleq
	\frac{2\gamma^2}{\left(2c+\gamma\right)^2+\gamma^2}\cdot$$
	\begin{prop}\label{prop hyp CR sigma}
		Let $(c,\gamma)\in\mathbb{R}^2$ and $\mathbf{m}\in\mathbb{N}\setminus\{0,1\}$ with $\mathbf{m}>N(c,\gamma).$ Assume in addition that
		\begin{equation}\label{cond sigma}
			\frac{(2\mathbf{m}-1)\gamma^2-(2c+\gamma)^2}{\mathbf{m}(2c+\gamma)^2+(\mathbf{m}-2)\gamma^2}\not\in\mathbb{N}^*.
		\end{equation}
		Then, the following properties hold true.
		\begin{enumerate}[label=(\roman*)]
			\item The kernel of the operator $\mathcal{L}_{c,\sigma_{\mathbf{m}}(c,\gamma),\gamma}$ is one dimensional. More precisely,
			$$\ker\left(\mathcal{L}_{c,\sigma_{\mathbf{m}}(c,\gamma),\gamma}\right)=\mathtt{span}(x_{0,c,\gamma,\mathbf{m}}),\qquad x_{0,c,\gamma,\mathbf{m}}(x)\triangleq
			\begin{pmatrix}
				\cos(\mathbf{m}x)\\
				(2c+\gamma)\sin(\mathbf{m}x)
			\end{pmatrix}.$$
			\item The operator $\mathcal{L}_{c,\sigma_{\mathbf{m}}(c,\gamma),\gamma}:X_{\mathbf{m}}^{s}\longrightarrow Y_{\mathbf{m}}^{s}$ is Fredholm with zero index.
			\item The transversality condition holds, namely
			\begin{equation}\label{transversality sigma}
				\partial_\sigma\mathcal{L}_{c,\sigma,\gamma}|_{\sigma=\sigma_{\mathbf{m}}(c,\gamma)}\left[x_{0,c,\gamma,\mathbf{m}}\right]\not\in R\left(\mathcal{L}_{c,\sigma_{\mathbf{m}}(c,\gamma),\gamma}\right).
			\end{equation}
		\end{enumerate}
	\end{prop}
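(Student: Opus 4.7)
The plan is to mirror, step by step, the proof of Proposition \ref{prop hyp CR c}, with the roles of $c$ and $\sigma$ exchanged. Thanks to the Fourier multiplier structure furnished by Proposition \ref{prop lin op}, every question reduces to linear algebra on the $2\times 2$ blocks $M_{n\mathbf{m}}(c,\sigma,\gamma)$ from \eqref{def Mn}. Since $\mathbf{m}\geqslant 2$, the $\mathbf{m}$-fold spaces see only the modes $n=k\mathbf{m}$ with $k\in\mathbb{N}^*$, and the assumption $\mathbf{m}>N(c,\gamma)$ guarantees that $\sigma_{\mathbf{m}}(c,\gamma)>0$, so that we remain in the admissible range for $\sigma$.

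For $(i)$, the kernel is one-dimensional as soon as $\sigma_{\mathbf{m}}(c,\gamma)$ does not coincide with $\sigma_{k\mathbf{m}}(c,\gamma)$ for any integer $k\geqslant 2$. I would set $\alpha\triangleq(2c+\gamma)^2+\gamma^2$ and $\beta\triangleq 2\gamma^2$, cross-multiply the equation $\sigma_{\mathbf{m}}=\sigma_{k\mathbf{m}}$ using \eqref{def sigman}, and factor the resulting polynomial. The root corresponding to $k=1$ splits off and one is left with a single non-trivial candidate
$$k\mathbf{m}=\frac{\beta\mathbf{m}-\alpha}{\alpha\mathbf{m}-\beta}=\frac{(2\mathbf{m}-1)\gamma^2-(2c+\gamma)^2}{\mathbf{m}(2c+\gamma)^2+(\mathbf{m}-2)\gamma^2},$$
which is exactly the quantity that assumption \eqref{cond sigma} forbids from being in $\mathbb{N}^*$. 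This excludes every non-trivial spectral collision, hence $\ker\mathcal{L}_{c,\sigma_{\mathbf{m}}(c,\gamma),\gamma}$ is one-dimensional, and reading a generator of $\ker M_{\mathbf{m}}(c,\sigma_{\mathbf{m}}(c,\gamma),\gamma)$ off the first row of \eqref{def Mn} yields the vector $(1,2c+\gamma)^\top$ and therefore the announced $x_{0,c,\gamma,\mathbf{m}}$. Part $(ii)$ is then a direct consequence of Proposition \ref{prop lin op}-$(ii)$.

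For $(iii)$, I would re-use the duality pairing $\langle\cdot,\cdot\rangle$ on $Y_{\mathbf{m}}^{s}$ introduced in the proof of Proposition \ref{prop hyp CR c}. Solving $M_{\mathbf{m}}^\top v=0$ via the second row of \eqref{def Mn} forces $v_1=-(2c+\gamma)v_2$, producing the dual generator
$$y_{0,c,\gamma,\mathbf{m}}(x)=\begin{pmatrix}-(2c+\gamma)\sin(\mathbf{m}x)\\ \cos(\mathbf{m}x)\end{pmatrix},$$
and the same codimension-one orthogonal-supplement argument yields $R(\mathcal{L}_{c,\sigma_{\mathbf{m}}(c,\gamma),\gamma})=\mathtt{span}(y_{0,c,\gamma,\mathbf{m}})^{\perp_{\langle\cdot,\cdot\rangle}}$. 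Differentiating \eqref{matrix calL} in $\sigma$ gives the simple off-diagonal multiplier $\partial_\sigma\mathcal{L}_{c,\sigma,\gamma}=\bigl(\begin{smallmatrix}0 & 0 \\ \textnormal{Id}-|D|^2 & 0\end{smallmatrix}\bigr)$, so that $\partial_\sigma\mathcal{L}[x_{0,c,\gamma,\mathbf{m}}]=(0,(1-\mathbf{m}^2)\cos(\mathbf{m}x))^\top$ and its pairing with $y_{0,c,\gamma,\mathbf{m}}$ equals $1-\mathbf{m}^2$, which is non-zero precisely because $\mathbf{m}\geqslant 2$; this is the transversality \eqref{transversality sigma}. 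The only mildly delicate step in the entire argument is the algebraic simplification that identifies the non-trivial collision root with the left-hand side of \eqref{cond sigma}, since everything else is a direct transcription of the analogous $c$-parameter analysis.
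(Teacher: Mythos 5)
Your proposal is correct and follows essentially the same route as the paper: exclusion of spectral collisions by factoring the quadratic obtained from $\sigma_{\mathbf{m}}=\sigma_{k\mathbf{m}}$, reading the kernel and co-kernel generators off the singular block $M_{\mathbf{m}}$, and using the duality pairing to identify the range and verify transversality via the non-zero pairing $1-\mathbf{m}^2$. Your identification of the non-trivial collision root as the value of $k\mathbf{m}$ (rather than of $k$) is the reading consistent with condition \eqref{cond sigma}, and all explicit quantities --- the kernel vector $(1,2c+\gamma)^\top$, the dual generator $y_{0,c,\gamma,\mathbf{m}}$, and $\partial_\sigma\mathcal{L}$ --- agree with the paper's.
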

	\begin{proof}
		$(i)$ Let us study the spectral collisions. We need to solve
		\begin{equation}\label{spectral collision sigma}
			\sigma_{\mathbf{m}}(c,\gamma)=\sigma_{k\mathbf{m}}(c,\gamma),\qquad k\in\mathbb{N}^*.
		\end{equation}
		In what follows, we simply denote $\alpha\triangleq\alpha(c,\gamma)$ and $\beta\triangleq\beta(\gamma).$ Coming back to the expression \eqref{def sigman}, the previous equation is equivalent to
		$$(\alpha\mathbf{m}-\beta)(k^2\mathbf{m}^2-1)=(\alpha k\mathbf{m}-\beta)(\mathbf{m}^2-1),$$
		or again
		$$(\alpha\mathbf{m}-\beta)\mathbf{m}^2(k-1)\left(k-\frac{\beta\mathbf{m}-\alpha}{\mathbf{m}(\alpha\mathbf{m}-\beta)}\right)=(\alpha\mathbf{m}-\beta)\mathbf{m}^2k^2-\alpha\mathbf{m}(\mathbf{m}^2-1)k+\mathbf{m}(\beta\mathbf{m}-\alpha)=0.$$
		Therefore, the equation \eqref{spectral collision sigma} admits two solutions
		$$k_1=1\textnormal{ (trivial solution)},\qquad k_2=\frac{\beta\mathbf{m}-\alpha}{\alpha\mathbf{m}-\beta}=\frac{(2\mathbf{m}-1)\gamma^2-(2c+\gamma)^2}{\mathbf{m}(2c+\gamma)^2+(\mathbf{m}-2)\gamma^2}\cdot$$
		The condition \eqref{cond sigma} implies that the equation \eqref{spectral collision sigma} has no non-trivial solution. Then, we can conclude as in the proof of Proposition \ref{prop hyp CR c}-$(i).$\\
		
		\noindent $(ii)$ Follows from Proposition \ref{prop lin op}-$(ii)$.\\
		
		\noindent $(iii)$ Proceeding as in the proof of Proposition \ref{prop hyp CR c}-$(iii)$, we get
		$$R\left(\mathcal{L}_{c,\sigma_{\mathbf{m}(c,\gamma),\gamma}}\right)=\mathtt{span}(y_{0,c,\gamma,\mathbf{m}})^{\perp_{\langle\cdot,\cdot\rangle}},\qquad y_{0,c,\gamma,\mathbf{m}}(x)\triangleq
		\begin{pmatrix}
			-(2c+\gamma)\sin(\mathbf{m}x)\\
			\cos(\mathbf{m}x)
		\end{pmatrix}.$$
		From the expression \eqref{matrix calL}, we see that
		$$\partial_{\sigma}\mathcal{L}_{c,\sigma,\gamma}|_{\sigma=\sigma_{\mathbf{m}}(c,\gamma)}=\begin{pmatrix}
			0 & 0\\
			\textnormal{Id}-|D|^2 & 0
		\end{pmatrix}.$$
		Therefore, since $\mathbf{m}\neq1,$
		$$\left\langle\partial_{\sigma}\mathcal{L}_{c,\sigma,\gamma}|_{\sigma=\sigma_{\mathbf{m}}(c,\gamma)}[x_{0,c,\gamma,\mathbf{m}}],y_{0,c,\gamma,\mathbf{m}}\right\rangle=1-\mathbf{m}^2\neq0.$$
		This concludes \eqref{transversality sigma} and the proof of Proposition \ref{prop hyp CR sigma}.
	\end{proof}
	\begin{proof}[Proof of Theorem \ref{thm bif vortex sheet}-$(ii)$]
		We apply Theorem \ref{thm CR} together with Lemma \ref{lemma trivial sol} and Propositions \ref{prop regularity} and \ref{prop hyp CR sigma}.
	\end{proof}
	\subsection{Bifurcation from $\gamma$ for stationary vortex sheets}\label{subsec gamma}
	In this subsection, we fix $c=0$ and $\sigma>0.$ Then, we study the bifurcation from the parameter $\gamma.$ According to \eqref{det M1}, we get
	$$\det\big(M_1(0,\sigma,\gamma)\big)=0,$$
	and in view of \eqref{determinant}, for any $n\geqslant2$, we have
	$$\det\big(M_n(0,\sigma,\gamma)\big)=0\quad\Leftrightarrow\quad \gamma=\gamma_n^{\pm}(\sigma)\triangleq
	\pm\sqrt{\sigma(n+1)}.$$
	\begin{prop}\label{prop hyp CR gamma}
		Let $\sigma>0$ and $\mathbf{m}\in\mathbb{N}\setminus\{0,1\}.$ Then, the following properties hold true.
		\begin{enumerate}[label=(\roman*)]
			\item The kernel of the operator $\mathcal{L}_{0,\sigma,\gamma_{\mathbf{m}}^{\pm}(\sigma)}$ is one dimensional. More precisely,
			$$\ker\left(\mathcal{L}_{0,\sigma,\gamma_{\mathbf{m}}^{\pm}(\sigma)}\right)=\mathtt{span}(x_{0,\sigma,\mathbf{m}}^{\pm}),\qquad x_{0,\sigma,\mathbf{m}}^{\pm}(x)\triangleq
			\begin{pmatrix}
				\cos(\mathbf{m}x)\\
				\gamma_{\mathbf{m}}^{\pm}(\sigma)\sin(\mathbf{m}x)
			\end{pmatrix}.$$
			\item The operator $\mathcal{L}_{0,\sigma,\gamma_{\mathbf{m}}^{\pm}(\sigma)}:X_{\mathbf{m}}^{s}\longrightarrow Y_{\mathbf{m}}^{s}$ is Fredholm with zero index.
			\item The transversality condition holds, namely
			\begin{equation}\label{transversality gamma}
				\partial_{\gamma}\mathcal{L}_{0,\sigma,\gamma}|_{\gamma=\gamma_{\mathbf{m}}^{\pm}(\sigma)}\left[x_{0,\sigma,\mathbf{m}}^{\pm}\right]\not\in R\left(\mathcal{L}_{0,\sigma,\gamma^{\pm}_{\mathbf{m}}(\sigma)}\right).
			\end{equation}
		\end{enumerate}
	\end{prop}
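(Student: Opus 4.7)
The plan is to mirror the structure of Propositions \ref{prop hyp CR c} and \ref{prop hyp CR sigma} since the setup is identical up to the change of bifurcation parameter. The key observation is that because $c=0$ the determinant formula \eqref{determinant} simplifies dramatically, and the condition $\det\bigl(M_n(0,\sigma,\gamma)\bigr)=0$ becomes $\gamma^2=\sigma(n+1)$ for $n\geqslant 2$, while $\det\bigl(M_1(0,\sigma,\gamma)\bigr)=0$ automatically. Since $\mathbf{m}\geqslant 2$, the frequency $n\mathbf{m}\geqslant 2$ for every $n\geqslant 1$, so the trivial vanishing at $n=1$ never interferes with the $\mathbf{m}$-fold spectrum.

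For $(i)$, I would study the spectral collisions by solving $\gamma_{\mathbf{m}}^{\pm}(\sigma)^2 = \sigma(n\mathbf{m}+1)$, which after inserting $\gamma_{\mathbf{m}}^{\pm}(\sigma)^2=\sigma(\mathbf{m}+1)$ reduces to $\mathbf{m}+1=n\mathbf{m}+1$ and forces $n=1$. Hence $M_{\mathbf{m}}(0,\sigma,\gamma_{\mathbf{m}}^{\pm}(\sigma))$ is the only singular block and no non-resonance condition on the parameters is required. A direct computation using \eqref{def Mn} shows that the first row $-\tfrac{\gamma_{\mathbf{m}}^{\pm}(\sigma)\mathbf{m}}{2}a+\tfrac{\mathbf{m}}{2}b=0$ pins the kernel to multiples of $(1,\gamma_{\mathbf{m}}^{\pm}(\sigma))^{\top}$, yielding the stated generator $x_{0,\sigma,\mathbf{m}}^{\pm}$. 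Part $(ii)$ is immediate from Proposition \ref{prop lin op}-$(ii)$.

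For $(iii)$, I would apply the same duality scheme as in Proposition \ref{prop hyp CR c}-$(iii)$. The kernel of the transpose $M_{\mathbf{m}}^{\top}(0,\sigma,\gamma_{\mathbf{m}}^{\pm}(\sigma))$ is generated by $(-\gamma_{\mathbf{m}}^{\pm}(\sigma),1)^{\top}$, which gives
$$
y_{0,\sigma,\mathbf{m}}^{\pm}(x)\triangleq\begin{pmatrix}-\gamma_{\mathbf{m}}^{\pm}(\sigma)\sin(\mathbf{m}x)\\ \cos(\mathbf{m}x)\end{pmatrix},
$$
and the range identity $R\bigl(\mathcal{L}_{0,\sigma,\gamma_{\mathbf{m}}^{\pm}(\sigma)}\bigr)=\mathtt{span}(y_{0,\sigma,\mathbf{m}}^{\pm})^{\perp_{\langle\cdot,\cdot\rangle}}$ follows verbatim from the codimension argument used earlier. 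Differentiating \eqref{matrix calL} in $\gamma$ at $c=0$ gives
$$
\partial_{\gamma}\mathcal{L}_{0,\sigma,\gamma}=\begin{pmatrix}\tfrac{1}{2}\partial_{x} & 0\\ -2\gamma+\gamma|D| & \tfrac{1}{2}\partial_{x}\end{pmatrix},
$$
and applying it to $x_{0,\sigma,\mathbf{m}}^{\pm}$ yields first component $-\tfrac{\mathbf{m}}{2}\sin(\mathbf{m}x)$ and second component $\gamma_{\mathbf{m}}^{\pm}(\sigma)\bigl(\tfrac{3\mathbf{m}}{2}-2\bigr)\cos(\mathbf{m}x)$. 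Pairing against $y_{0,\sigma,\mathbf{m}}^{\pm}$ with the Fourier inner product gives $2(\mathbf{m}-1)\gamma_{\mathbf{m}}^{\pm}(\sigma)$, which is non-zero because $\mathbf{m}\geqslant 2$ and $\gamma_{\mathbf{m}}^{\pm}(\sigma)=\pm\sqrt{\sigma(\mathbf{m}+1)}\neq 0$ under $\sigma>0$. This establishes \eqref{transversality gamma}.

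No serious obstacle arises here: the proof is essentially algebraic and the spectral-collision step is even easier than in the previous two bifurcation analyses precisely because no non-resonance condition is necessary. The only point requiring mild care is recognising that the automatic vanishing $\det\bigl(M_1(0,\sigma,\gamma)\bigr)=0$ from \eqref{det M1} is harmless in the $\mathbf{m}$-fold setting since the smallest admissible frequency is $\mathbf{m}\geqslant 2$, and that the transversality inner product is non-zero exactly because the hypothesis rules out $\mathbf{m}=1$.
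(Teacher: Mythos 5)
Your proposal is correct and follows essentially the same route as the paper: ruling out spectral collisions (the paper phrases this as strict monotonicity of $n\mapsto\gamma_n^{+}(\sigma)$, which is equivalent to your direct computation forcing $n=1$), invoking Proposition \ref{prop lin op}-$(ii)$ for Fredholmness, and verifying transversality via the duality pairing with $y_{0,\sigma,\mathbf{m}}^{\pm}$, arriving at the same non-zero value $2\gamma_{\mathbf{m}}^{\pm}(\sigma)(\mathbf{m}-1)$.
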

	\begin{proof}
		$(i)$ Observe that $\gamma_{\mathbf{m}}^-(\sigma)=-\gamma_{\mathbf{m}}^+(\sigma)\neq0$ and that the sequence $\big(\gamma_{n}^{+}(\sigma)\big)_{n\geqslant2}$ is strictly increasing. This immediatly prevents spectral collisions and allows to conclude similarly to Proposition \ref{prop hyp CR c}-$(i)$.\\
		
		\noindent $(ii)$ Follows from Proposition \ref{prop lin op}-(ii).\\
		
		\noindent $(iii)$ Proceeding as in the proof of Proposition \ref{prop hyp CR c}-$(iii)$, we get
		$$R\left(\mathcal{L}_{c,\sigma,\gamma_{\mathbf{m}^{\pm}(\sigma)}}\right)=\mathtt{span}(y_{0,\sigma,\mathbf{m}}^{\pm})^{\perp_{\langle\cdot,\cdot\rangle}},\qquad y_{0,\sigma,\mathbf{m}}^{\pm}(x)\triangleq
		\begin{pmatrix}
			-\gamma_{\mathbf{m}}^{\pm}(\sigma)\sin(\mathbf{m}x)\\
			\cos(\mathbf{m}x)
		\end{pmatrix}.$$
		From \eqref{matrix calL}, we have
		$$\partial_{\gamma}\mathcal{L}_{0,\sigma,\gamma}|_{\gamma=\gamma_{\mathbf{m}}^{\pm}(\sigma)}=\begin{pmatrix}
			\frac{1}{2}\partial_{x} & 0\vspace{0.2cm}\\
			\gamma_{\mathbf{m}}^{\pm}(\sigma)(|D|-2) & \frac{1}{2}\partial_{x}
		\end{pmatrix}.$$
		Therefore, since $\mathbf{m}\neq1$ and $\gamma_{\mathbf{m}}^{\pm}(\sigma)\neq0,$
		$$\left\langle\partial_{\gamma}\mathcal{L}_{c,\sigma,\gamma}|_{\gamma=\gamma_{\mathbf{m}}^{\pm}(\sigma)}[x_{0,\sigma,\mathbf{m}}^{\pm}],y_{0,\sigma,\mathbf{m}}^{\pm}\right\rangle=2\gamma_{\mathbf{m}}^{\pm}(\sigma)\left(\mathbf{m}-1\right)\neq0.$$
		This implies \eqref{transversality gamma} and achieves the proof of Proposition \ref{prop hyp CR gamma}.
	\end{proof}
	\begin{proof}[Proof of Theorem \ref{thm bif vortex sheet}-$(iii)$]
		We apply Theorem \ref{thm CR} together with Lemma \ref{lemma trivial sol} and Propositions \ref{prop regularity} and \ref{prop hyp CR gamma}.
	\end{proof}

	\appendix
	\section{Appendix}
	
	\subsection{Derivation of \eqref{VS system} from \eqref{Euler} }
	\label{sec:contour_derivation}

	From \eqref{eq:upm_BS} it is immediate that we can recast \eqref{Euler} as an evolutionary equation on the interface only, since the bulk quantities $u^\pm$ and $p^\pm$ can be recovered from the interface evolution of $\omega$ and $z$. Such derivation is quite well-known in the literature (cf. \cite{FIL16,CCG10}) but we perform here detailed computations for the sake of clarity.\\

	Given two functions $f^{\pm}:\Omega^{\pm}(t)\to\mathbb{R}$ we define
	\begin{align*}
		\bbra{f^{\pm}}\triangleq f^--f^+.
	\end{align*}
	Let us now define the trace of the velocity $u^\pm$ (cf. \eqref{eq:upm_BS}) as
	\begin{align}
		\label{eq:trace_velocity}
		v^{\pm}(t,x)\triangleq\left.u^{\pm}\right|_{\Gamma(t)}(t,x)=u^{\pm}\big(t,z(t,x)\big),\qquad(t,x)\in(0,T)\times\mathbb{T}. 
	\end{align}
	From \eqref{eq:upm_BS} we can compute $ v^\pm $ defined in \eqref{eq:trace_velocity} as 
	\begin{equation*}
		v^{\pm}\pare{t, x} = \lim _{\epsilon\searrow 0} u^{\pm}\pare{t, z\pare{t, x} \pm \epsilon z_x^\perp\pare{t, x}}
	\end{equation*}
	since $ z\pare{t, x} \pm \epsilon z_x^\perp\pare{t, x}\in \Omega^{\pm}\pare{t} $ for any $ x\in \mathbb{T} $,
	so that the trace of the velocity flow $ v^{\pm} $ can be recasted in terms of $ z $ and $ \omega $ via the \emph{Birkhoff-Rott integral operator} 
	\begin{align}\label{eq:trace_BR}
		v^{\pm}=\textnormal{BR}(z)\omega \mp\frac{\omega}{2}\frac{z_x}{\av{z_x}^2},\qquad\textnormal{BR}(z) \omega\pare{t,x}\triangleq\frac{1}{2\pi}\int_\mathbb{R}\frac{\pare{z\pare{t,x}-z\pare{t,y}}^\perp}{\av{z\pare{t,x}-z\pare{t,y}}^2} \omega \pare{t, y}\textnormal{d}y. 
	\end{align}
	From \eqref{eq:trace_BR} we can deduce hence a relation between the vorticity strength and the trace of the velocity valid in $ \pare{0, T}\times \mathbb{T} $ (cf. \eqref{eq:trace_velocity})
	\begin{equation}
		\label{eq:omega_vpm}
		\omega = \bbra{ v^{\pm} }  \cdot z_x. 
	\end{equation}

	\begin{remark}
		Notice that from \eqref{eq:trace_BR} it is immediate that the normal (to $ \Gamma\pare{t} $) component of the velocity flow is continuous through the interface. 
	\end{remark}

	The relation \eqref{eq:omega_vpm} allows us to express $ \omega $ in terms of the trace of the velocity flow, hence, to derive the evolution equation for $ \omega $ by taking the tangential (to $ \Gamma\pare{t} $) trace of the first equation of  \eqref{Euler} onto $ \Gamma\pare{t} $. This procedure produces the evolutionary equations for $ v^\pm $ which are computed in \cite[Eq. (2.2)]{FIL16} and are given by
	\begin{align}\label{eq:trace_velocities_evolution}
		\pare{v^{\pm} \cdot z_x}_t - \pare{v^\pm \cdot z_t}_x + \frac{1}{2}\pare{\av{v^{\pm}}^2}_x + \pare{\left. p^\pm \right|_{\Gamma\pare{t}}}_x + \pare{z_2}_x = 0,
	\end{align}
	where $z_2$ is the second component of the parametrization vector $z$. Using \eqref{eq:trace_velocities_evolution} we can hence compute the evolution equation for $\omega$ defined as in \eqref{eq:omega_vpm} which is, using the continuity of the stress tensor among the surface $\Gamma(t)$, i.e. the third equation of \eqref{Euler}
	\begin{equation}\label{eq:omega_consaverage1}
		\omega_t-\pare{\bbra{v^\pm}\cdot z_t}_x+\frac{1}{2}\pare{\bbra{\av{v^\pm}^2}}_x+\sigma\big(\mathcal{K}(z)\big)_x=0, 
	\end{equation}
	while we use \eqref{eq:trace_BR} in order to obtain the identity
	\begin{equation*}
		v^+=v^- - \omega \ \frac{z_x}{\av{z_x}^2}, 
	\end{equation*}
	from which we derive
	\begin{align}
		\label{eq:various_trace_relations}
		v^+\cdot z_x = v^-\cdot z_x - \omega , 
		&&
		v^+\cdot z_t = v^-\cdot z_t - \omega\frac{z_t\cdot z_x}{\av{z_x}^2} , 
		&&
		\av{v^+}^2 = \av{v^-}^2 + \frac{\omega^2}{\av{z_x}^2} - 2\omega \ \frac{v^-\cdot z_x}{\av{z_x}^2}\cdot 
	\end{align}
	The relations in \eqref{eq:various_trace_relations} and \eqref{eq:trace_BR} give us that
	\begin{equation}\label{eq:omega_consaverage2}
		-\pare{\bbra{v^\pm}\cdot z_t}_x+\frac{1}{2}\pare{\bbra{\av{v^\pm}^2}}_x=-\frac{1}{2}\pare{\frac{\omega^2}{\av{z_x}^2}}_x-\pare{\omega \ \frac{\pare{z_t-v^-}\cdot z_x}{\av{z_x}^2}}_x\\
		=-\pare{\omega\ \frac{\pare{z_t-\textnormal{BR}(z)\omega}\cdot z_x}{\av{z_x}^2}}_x.
	\end{equation}
	The system \eqref{Euler} can thus be recasted as an evolutionary equation on the interface only via the unknowns $ \pare{\omega, z} $, plugging \eqref{eq:omega_consaverage2} in \eqref{eq:omega_consaverage1}, thus obtaining \eqref{VS system}.
	
	\subsection{Derivation of \eqref{eq:KH2} from \eqref{VS system} }\label{appendix derive eq}
	Here we explain how to get the system \eqref{eq:KH2} from \eqref{VS system}.
	Let us consider a parametrization of $\Gamma(t)$ in the form
	$$z(t,x)=R(t,x)e^{\ii x},\qquad R(t,x)\triangleq
	\sqrt{1+2\eta(t,x)}.$$
	Straightforward calculations show that
	\begin{equation}\label{basic idtt}
		\begin{aligned}
			z_{t}(t,x)&=\frac{\eta_t(t,x)}{R(t,x)}e^{\ii x},\\
			z_{x}(t,x)&=\left(\frac{\eta_x(t,x)}{R(t,x)}+\ii R(t,x)\right)e^{\ii x},\\
			z_{x}^{\perp}(t,x)&=\left(\ii\frac{\eta_x(t,x)}{R(t,x)}-R(t,x)\right)e^{\ii x},\\
			z_{xx}(t,x)&=\left(\frac{\eta_{xx}(t,x)}{R(t,x)}-\frac{\eta_{x}^2(t,x)}{R^3(t,x)}+2\ii\frac{\eta_{x}(t,x)}{R(t,x)}-R(t,x)\right)e^{\ii x}.
		\end{aligned}
	\end{equation}
	As a consequence,
	\begin{equation}\label{basic idtt2}
		\begin{aligned}
			z_t\cdot z_{x}^{\perp}&=-\eta_{t},\\
			z_{t}\cdot z_{x}&=\frac{\eta_{t}\eta_x}{R^2},\\
			z_{x}^{\perp}\cdot z_{xx}&=3\left(\frac{\eta_x}{R}\right)^2-\eta_{xx}+R^2,\\
			|z_x|^2&=R^2+\left(\frac{\eta_{x}}{R}\right)^2.
		\end{aligned}
	\end{equation}
	The last two identities combined with \eqref{def K} give immediately
	\begin{equation}\label{curv}
		\mathcal{K}(z)=\frac{\eta_{xx}-2\left(\frac{\eta_{x}}{R}\right)^2}{\left(R^2+\left(\frac{\eta_{x}}{R}\right)^2\right)^{\frac{3}{2}}}-\left(R^2+\left(\frac{\eta_{x}}{R}\right)^2\right)^{-\frac{1}{2}}=\mathscr{K}(\eta).
	\end{equation}
	In addition,
	\begin{equation}\label{modul}
		\begin{aligned}
			\big(z(x)-z(y)\big)\cdot z_{x}(x)&=\eta_{x}(x)\left(1-\frac{R(y)}{R(x)}\cos(x-y)\right)+R(x)R(y)\sin(x-y),\\
			|z(x)-z(y)|^2&=2\big(1+\eta(x)+\eta(y)-R(x)R(y)\cos(x-y)\big).
		\end{aligned}
	\end{equation}
	Using \eqref{modul} and the notation \eqref{def H}, we deduce that
	\begin{align*}
		\textnormal{BR}(z)\omega\cdot z_{x}^{\perp}(x)&=\textnormal{p.v.}\int_{\mathbb{T}}\frac{\big(z(x)-z(y)\big)^{\perp}\cdot z_{x}^{\perp}(x)}{|z(x)-z(y)|^2}\omega(y)\textnormal{d} y\\
		&=\textnormal{p.v.}\int_{\mathbb{T}}\frac{\big(z(x)-z(y)\big)\cdot z_{x}(x)}{|z(x)-z(y)|^2}\omega(y)\textnormal{d} y\\
		&=\tfrac{1}{2}\mathscr{H}(\eta)[\omega](x).
	\end{align*}
	Together with \eqref{basic idtt2}, the first equation in \eqref{VS system} becomes
	\begin{equation}\label{eq1}
		\eta_{t}=-\tfrac{1}{2}\mathscr{H}(\eta)[\omega].
	\end{equation}
	Combining \eqref{VS system}, \eqref{basic idtt} and \eqref{basic idtt2}, we find
	\begin{align*}
		z_{t}\cdot z_{x}=-\frac{\eta_{x}}{R^2}\textnormal{BR}(z)\omega\cdot z_{x}^{\perp}.
	\end{align*}
	Thus,
	\begin{align*}
		\big(z_{t}-\textnormal{BR}(z)\omega\big)\cdot z_{x}&=-\textnormal{BR}(z)\omega\cdot\left(\frac{\eta_{x}}{R^2}z_{x}^{\perp}+z_{x}\right)\\
		&=-|z_{x}|^2\textnormal{BR}(z)\omega\cdot\left(\frac{\ii e^{\ii x}}{R}\right).
	\end{align*}
	But
	$$\big(z(x)-z(y)\big)^{\perp}\cdot\left(\frac{\ii e^{\ii x}}{R}\right)=1-\frac{R(y)}{R(x)}\cos(x-y).$$
	The last computations with \eqref{modul} and \eqref{def D0} give
	\begin{align*}
		\omega\frac{\big(z_{t}-\textnormal{BR}(z)\omega\big)\cdot z_{x}}{|z_x|^2}(x)&=-\omega(x)\textnormal{p.v.}\int_{\mathbb{T}}\frac{\big(z(x)-z(y)\big)^{\perp}\cdot\left(\tfrac{\ii e^{\ii x}}{R}\right)}{|z(x)-z(y)|^2}\omega(y)\textnormal{d} y\\
		&=-\tfrac{1}{2}\omega(x)\mathscr{D}_{0}(\eta)[\omega](x).
	\end{align*}
	This together with \eqref{VS system} and \eqref{curv} imply
	$$\omega_{t}=\Big(-\tfrac{1}{2}\omega\mathscr{D}_{0}(\eta)[\omega]-\sigma\mathscr{K}(\eta)\Big)_x.$$
	
	\subsection{Crandall-Rabinowitz Theorem}
	We state here the local bifurcation result obtained in \cite{CR71} and used in this study to construct our solutions.
	\begin{theo}\label{thm CR}
		\textbf{(Crandall-Rabinowitz)}\\
		Let $X$ and $Y$ be two Banach spaces. Let $(p_0,u_0)\in\mathbb{R}\times X$ and $U$ be a neighborhood of $(p_0,u_0)$ in $\mathbb{R}\times X.$ Consider a $C^1$ function $F:U\rightarrow Y$ such that
		\begin{itemize}
			\item [$(1)$] $\forall(p,u_0)\in U,\quad F(p,u_0)=0.$
			\item [$(2)$] The operator $d_uF(p_0,u_0)$ is a Fredholm operator with zero index and such that
			$$\ker\big(d_uF(p_0,u_0)\big)=\mathtt{span}(x_0).$$
			\item [$(3)$] Transversality:
			$$\partial_{p}d_uF(p_0,u_0)[x_0]\not\in R\big(d_uF(p_0,u_0)\big).$$
		\end{itemize}
		If we decompose
		$$X=\mathtt{span}(x_0)\oplus Z,$$
		then there exist two $C^1$ functions
		$$p:(-\epsilon,\epsilon)\rightarrow\mathbb{R}\qquad\textnormal{and}\qquad z:(-\epsilon,\epsilon)\rightarrow Z,\qquad\textnormal{with}\qquad\epsilon>0,$$
		such that
		$$p(0)=p_0,\qquad z(0)=0$$
		and the set of zeros of $F$ in $U$ is the union of two curves 
		$$\big\{(p,u)\in U\quad\textnormal{s.t.}\quad F(p,u)=0\big\}=\big\{(p,u_0)\in U\big\}\cup\mathscr{C}_{\textnormal{\tiny{local}}},\qquad\mathscr{C}_{\textnormal{\tiny{local}}}\triangleq
		\big\{\big(p(\mathtt{s}),u_0+\mathtt{s}x_0+\mathtt{s}z(\mathtt{s})\big),\quad|\mathtt{s}|<\epsilon\big\}.$$
	\end{theo}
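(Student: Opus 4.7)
The plan is a Lyapunov--Schmidt reduction followed by two applications of the implicit function theorem. Since $d_uF(p_0,u_0)$ is Fredholm of zero index with one-dimensional kernel $\mathtt{span}(x_0)$, its range $R\triangleq R\big(d_uF(p_0,u_0)\big)$ is closed of codimension one in $Y$; fix a one-dimensional complement $N\subset Y$ and the associated bounded projector $Q:Y\to N$ with $\ker Q=R$. Using the decomposition $X=\mathtt{span}(x_0)\oplus Z$ from the statement, parametrize every $u$ near $u_0$ as $u=u_0+\mathtt{s}x_0+w$ with $\mathtt{s}\in\mathbb{R}$ and $w\in Z$, and split $F(p,u)=0$ into
\begin{equation*}
(I-Q)F\big(p,u_0+\mathtt{s}x_0+w\big)=0\qquad\textnormal{and}\qquad QF\big(p,u_0+\mathtt{s}x_0+w\big)=0.
\end{equation*}

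First I would solve the range equation for $w$. Its derivative in $w$ at $(p_0,0,0)$ is the restriction $(I-Q)d_uF(p_0,u_0)|_Z:Z\to R$, which is a Banach isomorphism by the very choice of $Z$ and $R$. The IFT then produces a $C^1$ map $(p,\mathtt{s})\mapsto w(p,\mathtt{s})$ defined on a neighborhood of $(p_0,0)$ with $w(p_0,0)=0$ solving the range equation, and the uniqueness part combined with hypothesis $(1)$ forces $w(p,0)\equiv 0$ there. Differentiating the range equation in $\mathtt{s}$ at $(p_0,0)$ and using $x_0\in\ker d_uF(p_0,u_0)$ moreover yields $\partial_{\mathtt{s}}w(p_0,0)=0$.

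Substituting back reduces the problem to the scalar equation $\Phi(p,\mathtt{s})\triangleq QF\big(p,u_0+\mathtt{s}x_0+w(p,\mathtt{s})\big)=0$ valued in the one-dimensional space $N$. Since $\Phi(p,0)\equiv 0$, a Hadamard factorization gives $\Phi(p,\mathtt{s})=\mathtt{s}\,\widetilde\Phi(p,\mathtt{s})$ with $\widetilde\Phi$ of class $C^1$, so the trivial branch $\mathtt{s}=0$ splits off and the bifurcating curve will come from solving $\widetilde\Phi(p,\mathtt{s})=0$ for $p=p(\mathtt{s})$ near $(p_0,0)$, which by the IFT requires the non-degeneracy of $\partial_p\widetilde\Phi(p_0,0)$.

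The main obstacle --- and the precise reason assumption $(3)$ is needed --- is the computation of that derivative. Differentiating $\partial_{\mathtt{s}}\Phi(p,0)=Qd_uF(p,u_0)\cdot\big(x_0+\partial_{\mathtt{s}}w(p,0)\big)$ in $p$ at $p_0$, using $\partial_{\mathtt{s}}w(p_0,0)=0$, and using that $d_uF(p_0,u_0)$ sends $Z$ into $R=\ker Q$, one obtains
\begin{equation*}
\partial_p\widetilde\Phi(p_0,0)=Q\,\partial_p d_uF(p_0,u_0)[x_0],
\end{equation*}
which is non-zero precisely because of the transversality condition $(3)$. A final application of the IFT yields a $C^1$ function $\mathtt{s}\mapsto p(\mathtt{s})$ with $p(0)=p_0$ and $\widetilde\Phi(p(\mathtt{s}),\mathtt{s})\equiv 0$. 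One more Hadamard factorization $w(p,\mathtt{s})=\mathtt{s}\,\widetilde w(p,\mathtt{s})$ (valid since $w(p,0)\equiv 0$) together with the definition $z(\mathtt{s})\triangleq\widetilde w(p(\mathtt{s}),\mathtt{s})$ produces the $C^1$ parametrization of $\mathscr{C}_{\textnormal{\tiny{local}}}$ claimed in the theorem, while the local uniqueness built into each IFT step ensures that the zero set of $F$ near $(p_0,u_0)$ is exactly the union of the trivial branch and $\mathscr{C}_{\textnormal{\tiny{local}}}$.
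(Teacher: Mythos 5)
The paper offers no proof of this statement: Theorem \ref{thm CR} is quoted verbatim from the cited article \cite{CR71}, so there is nothing internal to compare your argument against. On its own merits, your Lyapunov--Schmidt reduction is the standard (and essentially the original) proof: the splitting of $F=0$ into range and bifurcation equations via a projector $Q$ along $R\big(d_uF(p_0,u_0)\big)$, the first IFT application using that $(I-Q)d_uF(p_0,u_0)|_Z=d_uF(p_0,u_0)|_Z:Z\to R$ is an isomorphism, the observations $w(p,0)\equiv0$ and $\partial_{\mathtt{s}}w(p_0,0)=0$, the factorization $\Phi=\mathtt{s}\,\widetilde\Phi$, and the identification $\partial_p\widetilde\Phi(p_0,0)=Q\,\partial_pd_uF(p_0,u_0)[x_0]\neq0$ are all correct and correctly motivated; the final uniqueness claim does follow from the uniqueness clauses of the two IFT applications.

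One point deserves flagging, though it is a defect of the stated hypotheses as much as of your proof. The Hadamard factorization gives $\widetilde\Phi(p,\mathtt{s})=\int_0^1\partial_{\mathtt{s}}\Phi(p,t\mathtt{s})\,dt$, which under $F\in C^1$ is only \emph{continuous}; to apply the IFT to $\widetilde\Phi=0$ and to get a $C^1$ branch $\mathtt{s}\mapsto p(\mathtt{s})$ you need $\partial_p\widetilde\Phi$ to exist and be continuous, i.e.\ continuity of the mixed derivative $\partial_p\partial_{\mathtt{s}}\Phi$, which traces back to continuity of $d_pd_uF$. This is precisely the extra hypothesis imposed in \cite{CR71} (there $F$, $F_u$, $F_p$ and $F_{pu}$ are assumed continuous), and it is silently used both in your step ``$\widetilde\Phi$ of class $C^1$'' and in the theorem's own statement, which asks only for $F\in C^1$ yet concludes $C^1$ regularity of $p$ and $z$. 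With $F\in C^1$ alone one obtains continuous, not $C^1$, functions $p(\cdot)$ and $z(\cdot)$; since every application of the theorem in this paper involves a functional that is analytic in the parameters $c,\sigma,\gamma$, nothing is lost in practice, but your write-up should state where the mixed-derivative hypothesis enters.
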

	
	\begin{footnotesize}
		\bibliography{references}

\begin{thebibliography}{10}

\bibitem{Ambrose03}
David~M. Ambrose.
\newblock Well-posedness of vortex sheets with surface tension.
\newblock {\em SIAM Journal on Mathematical Analysis}, 35(1):211--244, 2003.

\bibitem{Ambrose07}
David~M. Ambrose.
\newblock Regularization of the {K}elvin-{H}elmholtz instability by surface
  tension.
\newblock {\em Philosophical Transactions of the Royal Society of London.
  Series A. Mathematical, Physical and Engineering Sciences},
  365(1858):2253--2266, 2007.

\bibitem{AM07}
David~M. Ambrose and Nader Masmoudi.
\newblock Well-posedness of 3{D} vortex sheets with surface tension.
\newblock {\em Communications in Mathematical Sciences}, 5(2):391--430, 2007.

\bibitem{BBHM18}
Pietro Baldi, Massimiliano Berti, Emanuele Haus, and Riccardo Montalto.
\newblock Time quasi-periodic gravity water waves in finite depth.
\newblock {\em Inventiones Mathematicae}, 214(2):739–911, 2018.

\bibitem{BJLM24}
Pietro Baldi, Vesa Julin, and Domenico~Angelo La~Manna.
\newblock Liquid drop with capillarity and rotating traveling waves.
\newblock {\em arXiv preprint arXiv:2408.02333}.

\bibitem{BB97}
Thomas~Brooke Benjamin and Thomas~J. Bridges.
\newblock Reappraisal of the {K}elvin–{H}elmholtz problem. {P}art 1.
  {H}amiltonian structure.
\newblock {\em Journal of Fluid Mechanics}, 333:301--325, 1997.

\bibitem{BCGS2023}
Massimiliano Berti, Scipio Cuccagna, Francisco Gancedo, and Stefano Scrobogna.
\newblock Paralinearization and extended lifespan for solutions of the $ \alpha
  $-{SQG} sharp front equation.
\newblock {\em arXiv preprint arXiv:2310.15963}, 2023.

\bibitem{BD2016}
Massimiliano Berti and Jean-Marc Delort.
\newblock {\em Almost global solutions of capillary-gravity water waves
  equations on the circle}, volume~24 of {\em Lecture Notes of the Unione
  Matematica Italiana}.
\newblock Springer, Cham; Unione Matematica Italiana, [Bologna], 2018.

\bibitem{BFM21}
Massimiliano Berti, Luca Franzoi, and Alberto Maspero.
\newblock Traveling quasi-periodic water waves with constant vorticity.
\newblock {\em Archive for Rational Mechanics and Analysis}, 240:99–202,
  2021.

\bibitem{BFM24}
Massimiliano Berti, Luca Franzoi, and Alberto Maspero.
\newblock Pure gravity traveling quasi-periodic water waves with constant
  vorticity.
\newblock {\em Communications on Pure and Applied Mathematics},
  77(2):990--1064, 2024.

\bibitem{BHM22}
Massimiliano Berti, Zineb Hassainia, and Nader Masmoudi.
\newblock Time quasi-periodic vortex patches.
\newblock {\em Inventiones Mathematicae}, 233(3):1--113, 2023.

\bibitem{BMM2022}
Massimiliano Berti, Alberto Maspero, and Federico Murgante.
\newblock Hamiltonian {B}irkhoff normal form for gravity-capillary water waves
  with constant vorticity: almost global existence.
\newblock {\em arXiv preprint arXiv:2212.12255}, 2022.

\bibitem{BMM2023}
Massimiliano Berti, Alberto Maspero, and Federico Murgante.
\newblock Hamiltonian paradifferential {B}irkhoff normal form for water waves.
\newblock {\em Regular and Chaotic Dynamics. International Scientific Journal},
  28(4-5):543--560, 2023.

\bibitem{BM20}
Massimiliano Berti and Riccardo Montalto.
\newblock Quasi-periodic standing wave solutions of gravity-capillary water
  waves.
\newblock {\em Memoirs of the American Mathematical Society}, 263(1273), 2020.

\bibitem{Burbea82}
Jacob Burbea.
\newblock Motions of vortex patches.
\newblock {\em Letters in Mathematical Physics,}, 6(1):1--16, 1982.

\bibitem{CO1989}
Russel~E. Caflisch and Oscar~F. Orellana.
\newblock Singular solutions and ill-posedness for the evolution of vortex
  sheets.
\newblock {\em SIAM Journal on Mathematical Analysis}, 20(2):293--307, 1989.

\bibitem{CQZ2023a}
Daomin Cao, Guolin Qin, and Changjun Zou.
\newblock Co-rotating and traveling vortex sheets for the 2{D} incompressible
  {E}uler equation.
\newblock {\em Nonlinear Analysis}, 228:113186, 2023.

\bibitem{CQZ2023b}
Daomin Cao, Guolin Qin, and Changjun Zou.
\newblock Existence of stationary vortex sheets for the 2{D} incompressible
  {E}uler equation.
\newblock {\em Canadian Journal of Mathematics}, 75(3):828--853, 2023.

\bibitem{CCG2012}
Angel Castro, Diego C\'{o}rdoba, and Francisco Gancedo.
\newblock A naive parametrization for the vortex-sheet problem.
\newblock In {\em Mathematical aspects of fluid mechanics}, volume 402 of {\em
  London Mathematical Society Lecture Note Series}, pages 88--115. Cambridge
  University Press, Cambridge, 2012.

\bibitem{CCG2016}
Angel Castro, Diego C{\'o}rdoba, and Javier G{\'o}mez-Serrano.
\newblock Uniformly rotating analytic global patch solutions for active
  scalars.
\newblock {\em Annals of PDE}, 2(1):Art. 1, 34, 2016.

\bibitem{CCG2019}
Angel Castro, Diego C{\'o}rdoba, and Javier G{\'o}mez-Serrano.
\newblock Uniformly rotating smooth solutions for the incompressible 2{D}
  {E}uler equations.
\newblock {\em Archive for Rational Mechanics and Analysis}, 231(2):719--785,
  2019.

\bibitem{CCS08}
Ching-Hsiao~Arthur Cheng, Daniel Coutand, and Steve Shkoller.
\newblock On the motion of vortex sheets with surface tension in
  three-dimensional {E}uler equations with vorticity.
\newblock {\em Communications on Pure and Applied Mathematics},
  61(12):1715--1752, 2008.

\bibitem{CR21}
Christophe Cheverry and Nicolas Raymond.
\newblock {\em A guide to spectral theory Applications and exercises}.
\newblock Birkh\"{a}user/Springer, Cham, 2021.

\bibitem{CCG10}
Antonio C\'{o}rdoba, Diego C\'{o}rdoba, and Francisco Gancedo.
\newblock Interface evolution: water waves in 2-{D}.
\newblock {\em Advances in Mathematics}, 223(1):120--173, 2010.

\bibitem{CR71}
Michael~G. Crandall and Paul~H. Rabinowitz.
\newblock Bifurcation from simple eigenvalues.
\newblock {\em Journal of Functional Analysis}, 8:321--340, 1971.

\bibitem{DHHM16}
Francisco de~la Hoz, Zineb Hassainia, Taoufik Hmidi, and Joan Mateu.
\newblock An analytical and numerical study of steady patches in the disc.
\newblock {\em Analysis and PDE}, 9(7):1609--1670, 2016.

\bibitem{HHMV16}
Francisco de~la Hoz, Taoufik Hmidi, Joan Mateu, and Joan Verdera.
\newblock Doubly connected {$V$}-states for the planar {E}uler equations.
\newblock {\em SIAM Journal on Mathematical Analysis}, 48(3):1892--1928, 2016.

\bibitem{Delort1991}
Jean-Marc Delort.
\newblock Existence de nappes de tourbillon en dimension deux.
\newblock {\em Journal of the American Mathematical Society}, 4(3):553--586,
  1991.

\bibitem{FIL16}
Charles Fefferman, Alexandru~D. Ionescu, and Victor Lie.
\newblock On the absence of splash singularities in the case of two-fluid
  interfaces.
\newblock {\em Duke Mathematical Journal}, 165(3):417--462, 2016.

\bibitem{FG20a}
Roberto Feola and Filippo Giuliani.
\newblock Quasi-periodic traveling waves on an infinitely deep perfect fluid
  under gravity.
\newblock {\em arXiv preprint arXiv:2005.08280. To appear in Memoirs of the
  American Mathematical Society}, 2020.

\bibitem{FG20b}
Roberto Feola and Filippo Giuliani.
\newblock Time quasi-periodic traveling gravity water waves in infinite depth.
\newblock {\em Atti della Accademia nazionale dei Lincei. Rendiconti Lincei.
  Matematica e Applicazioni}, 31(4):901–916, 2020.

\bibitem{GHR2023}
Claudia Garcia, Zineb Hassainia, and Emeric Roulley.
\newblock Dynamics of vortex cap solutions on the rotating unit sphere.
\newblock {\em arXiv preprint arXiv:2306.00154}, 2023.

\bibitem{GHM2024}
Claudia García, Taoufik Hmidi, and Joan Mateu.
\newblock Time periodic solutions close to localized radial monotone profiles
  for the 2{D} {E}uler equations.
\newblock {\em Annals of PDE}, 10(1), 2024.

\bibitem{GIP23}
Javier G{\'o}mez-Serrano, Alexandru~D. Ionescu, and Jaemin Park.
\newblock Quasiperiodic solutions of the generalized {SQG} equation.
\newblock {\em arXiv preprint arXiv:2303.03992}, 2023.

\bibitem{GSPSY21}
Javier G\'{o}mez-Serrano, Jaemin Park, Jia Shi, and Yao Yao.
\newblock Remarks on stationary and uniformly-rotating vortex sheets: rigidity
  results.
\newblock {\em Communications in Mathematical Physics}, 386(3):1845--1879,
  2021.

\bibitem{GSPSY22}
Javier G\'{o}mez-Serrano, Jaemin Park, Jia Shi, and Yao Yao.
\newblock Remarks on stationary and uniformly rotating vortex sheets:
  flexibility results.
\newblock {\em Philosophical Transactions of the Royal Society A. Mathematical,
  Physical and Engineering Sciences}, 380(2226):Paper No. 20210045, 15, 2022.

\bibitem{HHR23}
Zineb Hassainia, Taoufik Hmidi, and Emeric Roulley.
\newblock Invariant {KAM} tori around annular vortex patches for 2{D} {E}uler
  equations.
\newblock {\em arXiv preprint arXiv:2302.01311}, 2023.

\bibitem{HMW20}
Zineb Hassainia, Nader Masmoudi, and Miles~H. Wheeler.
\newblock Global bifurcation of rotating vortex patches.
\newblock {\em Communications on Pure and Applied Mathematics},
  73(9):1933--1980, 2020.

\bibitem{HR22}
Zineb Hassainia and Emeric Roulley.
\newblock Boundary effects on the emergence of quasi-periodic solutions for
  {E}uler equations.
\newblock {\em arXiv preprint arXiv:2202.10053}, 2022.

\bibitem{Helmholtz1858}
Hermann~von Helmholtz.
\newblock Über integrale der hydrodynamischen gleichungen, welche den
  wirbelbewegungen entsprechen.
\newblock {\em Journal für die reine und angewandte Mathematik}, 55:25--55,
  1858.

\bibitem{HHRZ24}
Taoufik Hmidi, Haroune Houamed, Emeric Roulley, and Mohamed Zerguine.
\newblock Uniformly rotating vortices for the lake equation.
\newblock {\em arXiv preprint arXiv:2401.14273}, 2024.

\bibitem{HM16b}
Taoufik Hmidi and Joan Mateu.
\newblock Bifurcation of rotating patches from {K}irchhoff vortices.
\newblock {\em Discrete and Continuous Dynamical Systems}, 36(10):5401--5422,
  2016.

\bibitem{HM16a}
Taoufik Hmidi and Joan Mateu.
\newblock Degenerate bifurcation of the rotating patches.
\newblock {\em Advances in Mathematics}, 302:799--850, 2016.

\bibitem{HM13}
Taoufik Hmidi, Joan Mateu, and Joan Verdera.
\newblock Boundary regularity of rotating vortex patches.
\newblock {\em Archive for Rational Mechanics and Analysis}, 209(1):171--208,
  2013.

\bibitem{HMV15}
Taoufik Hmidi, Joan Mateu, and Joan Verdera.
\newblock On rotating doubly connected vortices.
\newblock {\em Journal of Differential Equations}, 258(4):1395--1429, 2015.

\bibitem{HR21}
Taoufik Hmidi and Emeric Roulley.
\newblock Time quasi-periodic vortex patches for quasi-geostrophic
  shallow-water equations.
\newblock {\em arXiv preprint arXiv:2110.13751. To appear in Mémoires de la
  Société Mathématique de France}, 2021.

\bibitem{HXX2023}
Taoufik Hmidi, Liutang Xue, and Zhilong Xue.
\newblock Unified theory on {V}-states structures for active scalar equations.
\newblock {\em arXiv preprint arXiv:2312.02874}, 2023.

\bibitem{HH03}
Thomas~Y. Hou and Gang Hu.
\newblock A nearly optimal existence result for slightly perturbed 3-{D} vortex
  sheets.
\newblock {\em Communications in Partial Differential Equations},
  28(1-2):155--198, 2003.

\bibitem{HLS97}
Thomas~Y. Hou, John~S. Lowengrub, and Michael~J. Shelley.
\newblock The long-time motion of vortex sheets with surface tension.
\newblock {\em Physics of Fluids}, 9(7):1933--1954, 1997.

\bibitem{Lannes13}
David Lannes.
\newblock A stability criterion for two-fluid interfaces and applications.
\newblock {\em Archive for Rational Mechanics and Analysis}, 208(2):481--567,
  2013.

\bibitem{L02}
Gilles Lebeau.
\newblock Régularité du problème de {K}elvin-{H}elmholtz pour l'équation
  d'{E}uler 2{D}.
\newblock {\em ESAIM Control, Optimisation and Calculus of Variations},
  8:801--825, 2002.

\bibitem{MB02}
Andrew~J. Majda and Andrea~L. Bertozzi.
\newblock {\em Vorticity and incompressible flow}, volume~27 of {\em Cambridge
  Texts in Applied Mathematics}.
\newblock Cambridge University Press, Cambridge, 2002.

\bibitem{MMS2024}
Riccardo Montalto, Federico Murgante, and Stefano Scrobogna.
\newblock Quadratic lifespan for the sublinear $ \alpha $-{SQG} sharp front
  problem.
\newblock {\em arXiv preprint arXiv:2402.06364}, 2024.

\bibitem{PS20}
Bartosz Protas and Takashi Sakajo.
\newblock Rotating equilibria of vortex sheets.
\newblock {\em Physica D}, 403:132286, 2020.

\bibitem{Roulley2023c}
Emeric Roulley.
\newblock Local and global bifurcation of electron states.
\newblock {\em arXiv preprint arXiv:2309.11260}, 2023.

\bibitem{Roulley2023a}
Emeric Roulley.
\newblock Periodic and quasi-periodic {E}uler-{$\alpha$} flows close to
  {R}ankine vortices.
\newblock {\em Dynamics of Partial Differential Equations}, 20(4):311--366,
  2023.

\bibitem{SZ11}
Jalal Shatah and Chongchun Zeng.
\newblock Local well-posedness for fluid interface problems.
\newblock {\em Archive for Rational Mechanics and Analysis}, 199(2):653--705,
  2011.

\bibitem{SS85}
Catherine Sulem and Pierre-Louis Sulem.
\newblock Finite time analyticity for the two- and three-dimensional
  {R}ayleigh-{T}aylor instability.
\newblock {\em Transactions of the American Mathematical Society},
  287(1):127--160, 1985.

\bibitem{SSBF81}
Catherine Sulem, Pierre-Louis Sulem, Claude Bardos, and Uriel Frisch.
\newblock Finite time analyticity for the two- and three-dimensional
  {K}elvin-{H}elmholtz instability.
\newblock {\em Communications in Mathematical Physics}, 80(4):485--516, 1981.

\bibitem{Wu06}
Sijue Wu.
\newblock Mathematical analysis of vortex sheets.
\newblock {\em Communications on Pure and Applied Mathematics},
  59(8):1065--1206, 2006.

\end{thebibliography}
		\bibliographystyle{plain}
	\end{footnotesize}
	\vspace{2cm}
	\noindent FEDERICO MURGANTE: Università Statale di Milano, Via Saldini 50 Milano, 20133 Italy.\\
	E-mail address: federico.murgante@unimi.it\\
	
	\noindent EMERIC ROULLEY: SISSA International School for Advanced Studies, Via Bonomea 265 Trieste, 34136 Italy.\\
	E-mail address: eroulley@sissa.it\\
	
	\noindent STEFANO SCROBOGNA: Department of Mathematics and Geosciences, University of Trieste, via Valerio 12/1 Trieste, 34127 Italy.\\
	E-mail address: stefano.scrobogna@units.it
\end{document}